\def\[{\begin{equation}}
\def\]{\end{equation}}
\newcounter{numer1}
\newtheorem{theorem}{Theorem}
\newtheorem{lemma}{Lemma}
\newtheorem{propo}{Proposition}
\newtheorem{definition}{Definition}
\newtheorem{assumption}{Assumption}
\newtheorem{OBS}{Remark}
\title{ Dynamics of Coupled Maps in Heterogeneous Random Networks}
\author{Tiago Pereira\footnote{tiago.pereira@imperial.ac.uk}, ~  Sebastian van Strien\footnote{s.van-strien@imperial.ac.uk}, ~ and Jeroen S.W. Lamb\footnote{jsw.lamb@imperial.ac.uk} \\
Department of Mathematics, Imperial College London\\ }
\begin{document}
\maketitle

\begin{abstract}
We study expanding circle maps interacting in a heterogeneous random network. ~ ~ Heterogeneity means that some nodes in the network are massively
connected, while the remaining nodes are only poorly connected.  We provide a
probabilistic approach which enables us to describe the effective dynamics of the massively connected nodes when taking a weak interaction limit. More precisely, 
we show that for almost every random network and almost all initial conditions 
the high dimensional network governing the dynamics of the massively connected nodes   can be reduced to a few macroscopic equations. Such reduction
is intimately related to the ergodic properties of the expanding maps. This reduction allows one to explore the coherent properties of the network. 
\end{abstract}

%\tableofcontents

\section{Introduction}

Understanding the behavior of interacting dynamical systems is a long standing problem. 
Main efforts focused on dynamical systems interacting in a lattice, 
and with a mean field coupling \cite{Kaneko,Keller}. Typical attempts establish the existence of an absolutely 
continuous invariant measure, see for example \cite{Sinai,Jiang, Rugh, Jar, BricmontKupiainen,KellerLiverani1,KellerLiverani2,Book}. More recently, the attention has shifted towards more general and 
irregular networks of coupled maps \cite{Young}. 

The last decade has witnessed a rapidly growing interest in dynamics on spaces 
with {\lq}complex topology{\rq}. This interest is partly motivated by 
results showing that the structure of a network  can 
dramatically influence the dynamical properties of the system, but also because
many, disparate, real-world networks share a common feature -- heterogeneity in the interaction structure \cite{Albert,Newman}. 
This suggests a network structure  in which most nodes have degree close to the minimum, while some  high-degree nodes, termed {\em hubs}, are present and have greater impact upon 
the network functioning.  Recent work has suggested the ability of networks with a heterogeneous degree distribution 
to present degree dependent collective behavior \cite{Zhou,Arenas,HubSync,Baptista}. 
Hubs may undergo a transition to coherence whereas the remaining nodes behave incoherently. 

The dynamical properties among the hubs play a central role in many realistic 
networks. There is evidence that the dynamics of hub neurons coordinate and shape the network 
in a developing hippocampal network \cite{HubScience}, and play a 
major role in epileptic seizures \cite{HubEp}.  
Recent efforts to understand the dynamics of the hubs 
concentrated mainly on numerical investigations \cite{Zhou,Arenas,HubSync,Baptista}. 
The description of the dynamical properties of the hubs remains elusive. 

It remains unknown how hub dynamics depends on various network parameters such as
the dynamics of the isolated nodes, and the structure of the network. Revealing
the dynamics of the hubs in relation to the graph structure is an important step toward understanding 
complex behavior and unveiling the topological implications on the network 
functioning. 

In this paper, we study the dynamics of coupled expanding circle maps on a 
heterogeneous random network. We provide a probabilistic approach to describe 
effective dynamics of the massively connected nodes in a weak interaction limit. We 
show that for almost every random network and almost all initial conditions 
the high dimensional network problem governing the dynamics of the massively connected nodes  
can be reduced to a few macroscopic equations. Such reduction
is intimately related to the ergodic properties of the expanding maps. This reduction allows one to 
explore the coherent properties of the network. Our analysis reveals that the intrinsic properties of the 
node dynamics also play a major role and may hinder or enhance 
coherence.

\section{Notation and Statement of the Results}
~ 

Our main object of study is the network dynamics of coupled maps.  
{\it A network of coupled dynamical systems} is defined to be a triple $(G,f_i,h)$ where:
\begin{itemize}
\item[] $G$ is a labelled graph of $n$ nodes, termed network; see Section \ref{Nets} for details. 
\item[] $f_i : M_i \rightarrow M_i$ is the local dynamics at  each node in the network $G$, see Section \ref{dyn}.
\item[] $h$\, described the coupling scheme, see Section \ref{Int}.
\end{itemize}
Abstractly, the network dynamics is defined by the iteration $\Phi : X \rightarrow X$, where $X:= M_1 \times \cdots \times M_n$
is the product space and $\Phi = A \circ F$, where $F = f_1 \times \cdots \times f_n$, and 
$A : X \rightarrow X$ defines the spacial interaction $G$ and type of coupling  interaction scheme $h$.  
In Section \ref{netdyn} we shall present the above network equation from the single node perspective. 
In what follows we define our local dynamics and the network class we are working with. Finally, we  define the class of 
coupling functions of interest. For simplicity we introduce the following 

% $\lnapprox$ $\leqslant$  	$\lesssim$  	$\gtrsim$ 	$\gtrapprox$ and $\lessapprox$	
{\bf Notation: } Given functions $a,b\colon \mathbb{R} \to \mathbb{R}$ (or sequences $a_n,b_n$),
we write $a\asymp b$ (resp. $a\lesssim b$) if there exists a universal constant  $C$ so that
$1/C\le |a/b|\le C$ (resp. $|a|\le C|b|$). Likewise, denote $a\gtrsim b$ if $|a|\ge C|b|$.
For simplicity, otherwise unless stated, we understand the sums running over $1$ to $n$. 
Throughout we denote by  $\|\zeta\|_0$  the $C^0$-norm of a function $\zeta$.

\subsection{Random Networks}\label{Nets}

We concentrate our attention on networks of $n$ nodes described by labelled graphs. 
Our terminology is that of Refs. \cite{Chung,Bollobas}.
We regard such graphs as networks of size $n$.  We use a random network model $\mathcal{G}(\bm{w})$ which is an extension of the Erd\"os-R\'enyi 
model for random graphs with a general degree distribution, see for example Ref. \cite{ChungComb}.
Here $\bm{w} = \bm{w}(n)$,
$$
{\bm w} = (w_1 , w_2 ,  \cdots, w_n ),
$$
will describe the expected degree of each node; 
for convenience we order $w_1 \ge w_2 \ge \cdots \ge w_n\ge 0$, and denote $w_1 = \Delta$. 
In this model $\mathcal{G}(\bm{w})$ consists of the space of all
graphs of size $n$, where each  
potential edge between $i$ and $j$ is chosen with probability 
$$
p_{ij} = w_i w_j \rho,
$$ 
and where 
$$\rho = \frac{1}{\sum_{i=1}^n w_i}.$$ To ensure that $p_{ij} \le 1$
it assumed that $\bm{w}=\bm{w}(n)$ is chosen so that 
\begin{equation}
\label{DeltaRho}
\Delta^2  \rho \le 1 .
\end{equation}
Note that the model $\mathcal{G}(\bm{w})=\mathcal{G}(\bm{w}(n))$ is actually a probability space, 
where the sample space is the finite set of networks of size $n$ endowed with the power set $\sigma$-algebra. Moreover, 
the probability measure $\mbox{Pr}$ on the sample space is generated by $p_{ij}$. 

Throughout the paper, we will take expectation with respect to measures associated with the node dynamics. 
Therefore, if for clarity we need to emphasize that the probability and expectation are
taken in $\mathcal{G}(\bm{w})$, we write for a given $C \in \mathcal{G}$ and for a random variable 
$X$,  
$$
\mbox{Pr}_{\bm{w}}(C)=\mbox{Pr}_{\bm{w}(n)}(C) \mbox{   and   }   \mathbb{E}_{\bm{w}}(X) = \mathbb{E}_{\bm{w}(n)}(X).
$$

{\bf Network Property:} We call a subset $Q \subset \mathcal{G}(\bm{w})$ a property of networks of order $n$ if transitivity holds:
if $G$ belongs to $Q$ and $H$ is isomorphic to $G$ (this means that the graphs are 
the same up to relabelling of the nodes) then $H$ belongs to $Q$ as well. 
We shall say that {\it almost every} network $G$ in $\mathcal{G}({\bm{w}(n)})$ has a certain property 
$Q$ if
\begin{equation}
\mbox{Pr}_{\bm{w}(n)} \left( G \mbox{  has property  } Q \right) \rightarrow 1
\label{AEnetwork}
\end{equation}
as $n \rightarrow \infty$. 
The assertion {\it almost every} $G \in \mathcal{G}(\bm{w})$ has property $Q$ is the same as 
the proportion of all labelled graphs of order $n$ that satisfy $Q$ tends to 1 as $n \rightarrow \infty$.

These networks may be described  in terms of its {\it adjacency 
matrix }  $A$, defined as 
$$
A_{ij} =
\left\{
\begin{array}{cc}
1 & \mbox{  if nodes $i$ and $j$ are connected  } \\
0 & \mbox{  otherwise  } \\
\end{array}
\right.
$$
In the model $\mathcal{G}$ each element of the adjacency $A_{ij}$'s 
is an independent Bernoulli (random) variable, taking  value $1$ with success probability $p_{ij}$.  
The {\em degree} $k_i$ of the $i$th node is the  number of connections it receives. $k_i$ is a 
random variable, which in terms of the adjacency matrix 
reads $k_i = \sum_j A_{ij}$. Note that as the network is oriented the number of connections $k_i^{out}$
the $i$th node makes with its neighbors need not to be $k_i$. Notice that $k_i^{out} = \sum_{j}A_{ji}$.
An interesting property of this model is that under this construction $w_i$
is the expected value of $k_i$, that is, $\mathbb{E}_{\bm{w}}(k_i) = w_i$ while 
also $\mathbb{E}_{\bm{w}}(k_i^{out}) = w_i$.

We are interested in the large size behavior of heterogeneous networks. Here we say that  a network is 
{\em heterogeneous} when there is a considerable disparity between the node's degree. Real 
world networks are typically heterogeneous -- a small fraction of nodes is
massively connected whereas the remaining nodes are only poorly connected.  To be precise, 
we study the following class

\begin{definition}[Strong Heterogeneity]
\label{H1} 
We say that the 
model $\mathcal{G}(\bm{w}(n))$ is strongly heterogeneous if the following 
hypotheses are satisfied:  
\begin{description}
 \item{N1 --} {\it Massively connected Hubs:} There is $\ell \in \mathbb{N}$ such that 
for every $1 \le i \le \ell$ if we write
$$
{w_{i,n}} = \kappa_{i,n} {\Delta}_n
$$
then  $\kappa_{i,n} \rightarrow \kappa_{i,\infty}$ as $n \rightarrow \infty$ and $\kappa_{i,\infty} \in  (0,1]$ (and $\kappa_{1,n}=1$).
We regard $\kappa_{i,n}$ as the normalized degrees.
\item{N2 --} {\it Slow growing low degrees :} for  $\ell < i \le n$ 
$$
\log n  ~\lesssim ~ w_{i,n}  ~ \lesssim ~ \Delta_n^{1- \gamma} 
$$
where $0<\gamma < 1$ control the scale separation between low degree nodes and the hub nodes. 
\item{N3 --} {\it Cardinality of the Hubs:} The number of hubs $\ell  = \ell(n)$ satisfies 
$$
{\ell}  \lesssim  \Delta^{\theta}_n
$$ 
for some small $\theta \le 1$.
\end{description}
\label{ws}
\end{definition}

We will often suppress the $n$ dependence in the notation so write
$\bm{w}$ instead of $\bm{w}(n)$ and we also often write $\Delta$ and $w_i,\kappa_i$ instead
of $\Delta_n$ and $w_{i,n},\kappa_{i,n}$.

The hypothesis that the number of highly connected nodes is significantly smaller than the system 
size,  $\theta < 1$, implies that the network is heterogeneous. The $\ell$ high-degree nodes are 
termed {\em hubs}, and the remaining nodes are called {\em low degree nodes}. 
These assumptions on heterogeneity mimic conditions observed in a large class of realistic networks
including neuronal networks, social interaction, internet,  among others \cite{Albert,Newman,HubScience}. Typically, the number of hubs in the network is much smaller than the system size (and the degrees of the hubs).
We wish to treat 
the large networks, so we assume that $n$ is large. That is, the networks we consider have finite but 
large number of nodes. 
\begin{figure}[htbp] %  figure placement: here, top, bottom, or page
   \centering
   \includegraphics[width=4in]{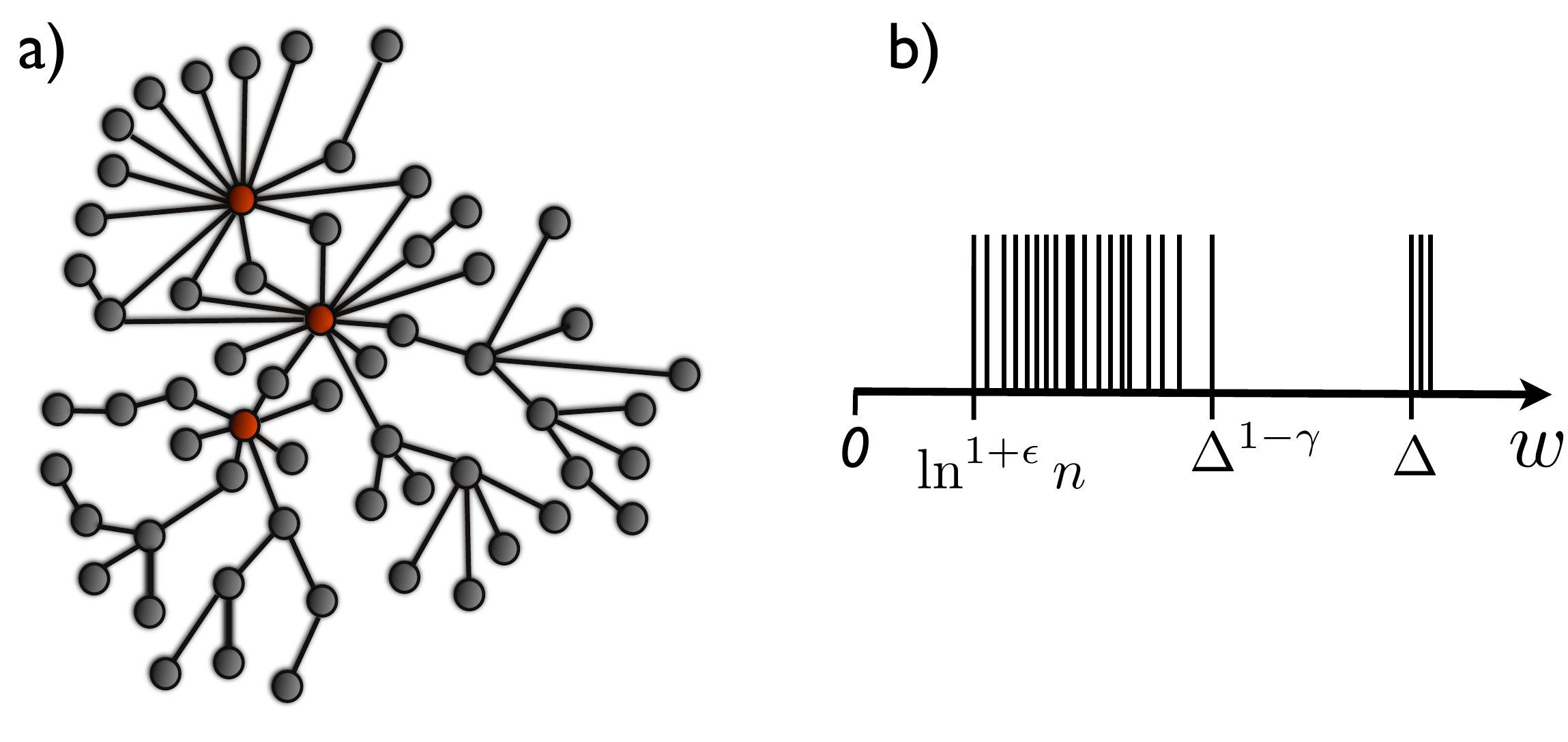} 
   \caption{The strong Heterogeneity condition. In a) a heterogeneous network, the hubs are depicted in red. While 
   the hubs are massively connected most nodes have only a few connections. In b) a pictorial presentation 
   of {\it N1} and {\it N2}. The axis denotes the expected degrees $w_i's$ ordered according to their magnitudes. 
   Whilst the hubs increase proportionally to $\Delta = \Delta_n$ the remaining nodes increase in another scale.%
   }
  \label{LambdaAlpha}
\end{figure}

An interesting property of such random graphs is that under the condition N2 the degrees have good 
concentration properties, as the next result shows
 
\begin{propo}[Concentration]
Let $\mathcal{G}(\bm{w})$ satisfy the strong heterogeneity condition. Then almost every random network $G \in \mathcal{G}(\bm{w})$   has every vertex satisfying 
\begin{equation}
| k_i - w_i |< w_i^{1/2 - \varepsilon} \nonumber
\label{ConIn}
\end{equation}
for any $\varepsilon > 0$.
(This statement should be understood in the sense of  (\ref{AEnetwork}).)
\end{propo}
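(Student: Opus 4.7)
The plan is to apply a Chernoff--Bernstein concentration inequality to each individual degree $k_i = \sum_{j} A_{ij}$ and then upgrade this to a uniform-in-$i$ statement via a union bound over the $n$ vertices, using the growth conditions N1--N2 to absorb the factor of $n$.

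First, write $k_i - w_i = \sum_{j} (A_{ij} - p_{ij})$; the summands are independent, centred, bounded by $1$, with total variance $\sum_j p_{ij}(1-p_{ij}) \le \sum_j p_{ij} = w_i$. Bernstein's inequality then yields
\[
\mathrm{Pr}_{\bm{w}}\bigl(|k_i - w_i| > t\bigr) \le 2\exp\!\left(-\frac{t^2/2}{w_i + t/3}\right), \qquad t > 0.
\]
Since $w_i \to \infty$ under both N1 and N2, for any target $t$ of order $w_i^{1/2-\varepsilon}$ the denominator is dominated by $w_i$ and the sub-Gaussian branch is active; one is then in the regime where the exponent scales like a power of $w_i$, and the role of the $\varepsilon$ is precisely to absorb the logarithmic correction coming from the next step.

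Next, union-bound over vertices. The $\ell$ hubs are the easy case: by N1, $w_i \asymp \Delta$, so the Bernstein tail at scale $t = w_i^{1/2-\varepsilon}$ is exponentially small in a power of $\Delta$, and since by N3 there are only $\ell \lesssim \Delta^{\theta}$ of them, their total contribution to the union is $o(1)$. For the $n-\ell$ low-degree vertices, N2 supplies $w_i \gtrsim \log n$; a careful choice of the constant implicit in the $\varepsilon$-correction ensures $\exp(-c\, t^2/w_i) \le n^{-2}$, again summable to $o(1)$. The event ``every vertex satisfies (\ref{ConIn})'' is invariant under relabelling of the node set, so the resulting subset of $\mathcal{G}(\bm{w})$ is a bona fide network property in the sense of~(\ref{AEnetwork}).

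The main obstacle is the sharpness of the stated deviation window. The central limit theorem places the typical fluctuation of $k_i$ at exactly the scale $\sqrt{w_i}$, so any window of order $w_i^{1/2-\varepsilon}$ must be read modulo sub-polynomial ($\log n$-type) corrections rather than as a strict polynomial suppression below $\sqrt{w_i}$. Concretely, the bookkeeping that has to be done is: (i) verify that, uniformly in $i$, $t^2/w_i$ exceeds a constant multiple of $\log n$; (ii) ensure that the quantitative margin in N2 is enough to carry out (i) with $t$ as small as $w_i^{1/2-\varepsilon}$; (iii) check that the hubs, despite being few, are handled in the same pass rather than as a separate exception. This calibration---matching the $\varepsilon$-exponent against the $\log n$ lower bound in N2 and the $\Delta$-scale in N1/N3---is where the substance of the proof lies; the Bernstein bound itself is completely standard once these scales are aligned.
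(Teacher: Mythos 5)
Your route (a Chernoff--Bernstein bound per vertex followed by a union bound over all $n$ vertices) is genuinely different from the paper's, which applies Chebyshev's inequality to each fixed vertex using the variance bound $\mathrm{Var}_{\bm{w}}(k_i)<w_i$ and never performs a union bound. But your argument has a gap precisely at the step you defer to ``bookkeeping'', and that step cannot be carried out. With the window $t=w_i^{1/2-\varepsilon}$ the Bernstein exponent is $\frac{t^2/2}{w_i+t/3}\asymp t^2/w_i=w_i^{-2\varepsilon}\to 0$, so the tail bound degenerates to a constant near $2$; your requirement (i), that $t^2/w_i$ exceed a multiple of $\log n$ uniformly in $i$, is therefore false rather than delicate. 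This is not an artefact of the choice of inequality: the window $w_i^{1/2-\varepsilon}$ sits strictly below the standard deviation $\sqrt{\mathrm{Var}_{\bm{w}}(k_i)}\asymp\sqrt{w_i}$, so by the central limit theorem the per-vertex event $|k_i-w_i|<w_i^{1/2-\varepsilon}$ has probability tending to $0$, not $1$. The statement is only provable with the exponent read as $1/2+\varepsilon$; indeed the paper's own displayed Chebyshev bound $\mathrm{Pr}_{\bm{w}}(|k_i-w_i|\ge t)\le w_i^{-2\varepsilon}$ is valid only for $t=w_i^{1/2+\varepsilon}$ (for $t=w_i^{1/2-\varepsilon}$ Chebyshev returns the useless bound $w_i^{2\varepsilon}$). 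You half-notice this when you invoke the CLT scale, but ``read modulo sub-polynomial corrections'' does not repair a window that is polynomially too small.

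Even after correcting the sign, your union bound over the $n-\ell$ low-degree vertices does not close for every $\varepsilon>0$. A per-vertex failure probability of order $n^{-2}$ from the sub-Gaussian branch requires $t^2/w_i\gtrsim\log n$, i.e.\ $t\gtrsim\sqrt{w_i\log n}$; at the bottom of the range allowed by N2, where $w_i\asymp\log n$, this forces $t\gtrsim\log n\asymp w_i$, so the window $w_i^{1/2+\varepsilon}$ survives the union bound only if $w_i^{2\varepsilon}\gtrsim\log n$, which N2 supplies only for $\varepsilon\ge 1/2$. No ``careful choice of the constant'' rescues this, since $(c\log n)^{2\varepsilon}\ll\log n$ for $\varepsilon<1/2$. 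The paper sidesteps the issue by establishing only the single-vertex statement (each fixed $i$ violates the bound with probability at most $w_i^{-2\varepsilon}\to 0$) and leaving the passage to ``every vertex'' implicit; if you want the genuinely uniform statement your strategy aims at, you must either strengthen the lower bound in N2 to $w_i\gtrsim(\log n)^{1/(2\varepsilon)}$ or enlarge the deviation window for the low-degree vertices to $\sqrt{w_i\log n}$.
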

\begin{proof}
We obtain this result via a Chebyshev inequality.
The expected value of $k_i = \sum_j A_{ij}$ is given by 
$\mathbb{E}_{\bm{w}}(k_i) = \sum_{j} w_i w_j \rho = w_i$. 
To estimate the variance of Var$_{\bm{w}} (k_i)$ notice that 
\begin{equation}\label{E2k}
\mathbb{E}_{\bm{w}}^2(k_i) = w_i^2 = w_i^2 \rho^2\sum_{j,k}  w_j w_k 
\end{equation}
and that 
$ \mathbb{E}_{\bm{w}}(k_i^2) =  \mathbb{E}_{\bm{w}}(\sum_{j,k} A_{ij}A_{ik})$. 
If $j\not=k$ then $A_{ij}$ and $A_{ik}$ are independent,  otherwise notice that $A^2_{ij} = A_{ij}$. This 
remark leads to 
\begin{equation}\label{Ek2}
\mathbb{E}_{\bm{w}}(k_i^2) = w_i + w_i^2 \rho^2 \sum_{j\not=k} w_i w_k  
\end{equation}
Combining (\ref{Ek2}) and (\ref{E2k}) we obtain 
$$
\mbox{Var}_{\bm{w}}(k_i) = w_i - w_i^2 \rho^2 \sum_j w_j^2 .
$$
This implies $\mbox{Var}_{\bm{w}}(k_i) <  w_i $. 
(In fact, the previous equation gives
$(1-p_{i,1})w_i\le \mbox{Var}_{\bm{w}}(k_i) < w_i$  where $p_{i1}\le 1$ is the probability to connect node $i$ to the main hub node $1$
since $w_j^2 \le \Delta w_j$ gives $\sum_j w_j^2 \le \Delta/ \rho$ and therefore
$w_i^2 \rho^2 \sum_j w_j^2 \le w_i p_{i1}$.)
Hence the Chebyshev inequality yields
$$
\mbox{Pr}_{\bm{w}} (|k_i - w_i | \ge w_i^{1/2 - \varepsilon}) \le w_i^{~ -2 \varepsilon}, 
$$
and condition N2 implies that this probability tends to zero as $n$ increases.  
Therefore, we obtain that $|k_i - w_i | \le w_i^{1/2 - \varepsilon}$ for almost every network, which concludes the proof. 
\end{proof}

\begin{OBS}\label{nDelta} This model imposes minimum and 
maximum growth conditions for $\Delta_n$ in terms of the network size $n$.
Indeed (N2) implies
$$
\Delta_n \gtrsim (\log n)^{\frac{1}{1-\gamma}}
$$
Moreover,   (\ref{DeltaRho}) leads to an upper estimate 
for the growth of the maximum expected degree $\Delta_n$, namely 
$$
\Delta_n \lesssim \max\big\{n^{1/2 + \delta} , n^{\frac{1}{1+ \gamma}}\big\}
$$
for some $\delta>0$ small enough.
\end{OBS}

The next example provides an illustration of a degree sequence satisfying the strong heterogeneity condition. We shall 
construct networks from this illustration for numerical simulations later on in the paper.
\medskip

\noindent
{\bf Example 1:} An example of a degree sequence satisfying the strong heterogeneity condition is 
$$
\bm{w} = (\kappa_1 \Delta, \dots , \kappa_{\ell} \Delta, w_{\ell+1} , \dots, w_n)
$$
with $1=\kappa_1  \ge \kappa_2 \ge \dots \ge \kappa_{\ell}>0$, 
so that  
$$\Delta = n^{\sigma}\mbox{ , } \ell = \Delta^{\theta} \mbox{ and }
w_i \asymp \Delta^{1 - \gamma} \mbox{ for }i\ge \ell$$
where $\gamma , \sigma, \theta\in (0,1)$.
Condition  (\ref{DeltaRho}) imposes a growth condition on the expected degree 
with scaling coefficient  
$$
\sigma < \frac{1}{1+\gamma}.
$$

It easy to modify this example so that the expected degree sequence exhibits a power law behavior in the distribution 
of the expected degrees, and other non trivial distributions. 
 
\subsection{Local Dynamics} \label{dyn}

We choose the dynamics on each node of the network to be identical $f_i = f$. 
In fact, it turns out that under our hypothesis 
if the dynamics on each node is slightly different our claims still hold true. We consider expanding maps on 
the circle $M= \mathbb{R} / \mathbb{Z}$ as a model of the isolated dynamics of the nodes. 
We will use that $M$ is  compact and the addition structure coming from $\mathbb{R}$, see Ref. \cite{vStrien} for details.
\begin{assumption}
\label{H2}
Let $f : M \rightarrow M$ be a $C^{1 + \nu}$ H\"older continuous expanding map, for some $\nu \in (0,1]$.  That is, we assume that there exists $\sigma > 1$ such that 
$$
\| D f(x) v\| \ge \sigma \| v \| 
$$
for all $x\in M$  and $v \in T_x M$, for some riemannian metric $\| \cdot \|$, and $Df$ is H\"older continuous 
with exponent $\nu$.
\end{assumption}

The differentiability condition, that is, $\nu > 0$, plays an important role in our analysis. It is well known that 
if $\nu = 0$, then $f$ may admit invariant measures which are singular with respect to Lebesgue measure. 
Moreover, if $\nu>0$ the system is structurally stable. 

\subsection{Interaction Function} \label{Int}

Our aim in this subsection is to introduce the interaction structure we will use in the network dynamics (\ref{md1}). We consider pairwise interaction 
$$
h : M \times M \rightarrow \mathbb{R}.
$$
For simplicity we assume $h$ satisfies the following 
representation
\begin{equation}\label{h}
h(x,y) = \sum_{p,q=1}^k u_p(x) v_q(y),
\end{equation}
where $k$ is an integer, and $u_p,v_q\colon M\to \mathbb R$ are $C^{1+\nu}$ functions (i.e.\ for each $x,y\in \mathbb R$ and $r,s\in  \mathbb{Z}$
one has  $u_p(x + r) = u_p(x)$ and $v_q(y+s) = v_q(y)$). Notice that
$h$ is well-defined. Of particular interest in applications is the interactions akin to diffusion, 
$$
h(x,y) = \sin 2\pi (x-y) \, \, \mbox{  and   }   \, \,  h(x,y) = \sin 2\pi x - \sin 2\pi y.
$$

With this interaction function we are ready to introduce the network dynamics.

\subsection{Network Dynamics} \label{netdyn}

Given an integer $n$ and a $n\times n$ interaction matrix $A$, 
we consider the dynamics of a network of $n$ coupled maps is described by 
$$F\colon M^n\to M^n\mbox{ where }(x_1(t+1),\dots,x_n(t+1))=F(x_1(t),\dots,x_n(t))$$
is defined by
\begin{equation}
{x}_i(t+1) = {\hat f}({ x}_i(t)) + \frac{\alpha}{\Delta} \sum_{j=1}^n A_{ij} h(x_j(t), x_i(t))\,  (\mbox{mod} \,1) , \,\, \, \,\mbox{ for } i=1,\dots,n.
\label{md1}
\end{equation}
Here  $\hat f\colon M\to \mathbb R$ is the lift of $f\colon M\to M$ (note that $M= \mathbb{R} / \mathbb{Z}$).
The right hand side in (\ref{md1}) is well-defined because $M$ has an addition structure,  $h\colon M\times M\to \mathbb R$ is well-defined and
for each choice of lift $f\colon M\to M$ to a map $\hat f\colon M\to \mathbb R$ the  expression in (\ref{md1}) gives the same result.  By abuse of notation we will denote the lift $\hat f$ also by $f$.
Here $A_{ij}$ is chosen as in Section~\ref{Nets}, 
 ${x}_i(t)$ describes the state of the $i$-th node at time (which has degree 
$k_i=\sum_{j}A_{ij}$) and $\Delta=w_1$ is the largest expected degree.
Moreover,  $\alpha$ 
is a free parameter which describes  the normalized overall coupling strength. We shall also denote the state $x_i(t)$
as $x_i$ whenever convenient if there is no risk of confusion.   
We are interested in the weak coupling limit as $n\to \infty$, 
that is, when $\alpha$ is independent of $\Delta$ and $n$.

\section{Main Result and Discussions}

Our main goal  is to obtain a low dimensional equation to describe the highly connected nodes. 
Our strategy is to prove the following reduction by means of an effective dynamics. 
For simplicity we choose the initial conditions on $M^n$ to be independent and identically distributed. More precisely

{\it Choice of Initial conditions:} Let 
$\mu$ be a measure supported on $M$ with density $\varphi$. Moreover, let $\log \varphi$ be $(a,\nu)-$H\"older where $\nu\in (0,1]$ (this notion is defined in Section~\ref{IsoN}). Consider the global phase space of the coupled maps $M^n$, and a product measure  $\mu^n : M^n \rightarrow [0,1]$ be   given such that $A = A_1 \times \cdots \times A_n \subset M^n$ we have $\mu^n(A) = \mu(A_1) \cdots \mu(A_n)$. This choice of initial conditions is natural in numerical experiments. We shall use this choice to state our main result

\begin{theorem}[Dynamics of Hubs]\label{thm:main}
Let $\mathcal{G}(\bm{w})$ be strongly heterogeneous and consider the coupled map network  (\ref{md1}) on $\mathcal{G}(\bm{w})$. Assume that the network structural parameters satisfy 
$$
0< \theta < 1 - \gamma \nu /2,
$$
and 
let all initial conditions on $M^n$ be given according to a product measure $\mu^n$.    
Then there exists a positive
number $u = u (\varphi)$ such that for almost every network in $\mathcal{G}(\bm{w})$ and for $\mu^n-$almost every initial condition the dynamics of the hubs $i=1,\dots, \ell$ is reduced to 
\begin{equation}
{x}_i(t+1) = { f}({ x}_i(t)) + \alpha \kappa_i g(x_i) + \alpha{\zeta}_i(t)  \mbox{~ (mod1) } , \mbox{ for each } t\ge u
\end{equation}
where $\kappa_i$ is the normalized degree, and
$$
g(x) = \int h(x,y) \mu_0 (dy)
$$
where $\mu_0$ is the invariant measure of $f$. Moreover, ${\zeta}_i(t)$ satisfies
$$
\| {\zeta}_i \|_0 \lesssim \kappa_i^{1/2} \Delta^{-\gamma \nu / 2 + \delta} \mbox{ for each } t\ge u
$$
for any $\delta>0$ (where $\|\zeta_i\|_0$ is the $C^0$-norm of $\zeta_i$).  \,
\label{MFR}
\end{theorem}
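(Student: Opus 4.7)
The plan is to exploit the tensor structure $h(x,y)=\sum_{p,q}u_p(x)v_q(y)$ to linearise the coupling term on each hub $i\le\ell$ into $\alpha\sum_{p,q} v_q(x_i)\,\Phi_p^i(t)$ with
$$\Phi_p^i(t) := \frac{1}{\Delta}\sum_{j} A_{ij}\,u_p(x_j(t)),$$
and then to show that for $t\ge u(\varphi)$ one has $\Phi_p^i(t)\approx \kappa_i \int u_p\,d\mu_0$ with error of the advertised size. Pairing this approximation against $v_q(x_i)$ and summing over $p,q$ produces the mean-field term $\alpha\kappa_i g(x_i)$, while the deviations are gathered into $\alpha\zeta_i(t)$.

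To estimate $\Phi_p^i$ I would separate the two independent sources of randomness: the graph $\mathcal G(\bm w)$ and the initial distribution $\mu^n$. Writing $A_{ij}=p_{ij}+\eta_{ij}$ with $\eta_{ij}$ the centred Bernoulli entries, the fluctuation $\Delta^{-1}\sum_j \eta_{ij}u_p(x_j(t))$ has conditional variance at most $\kappa_i\|u_p\|_0^2/\Delta$ by independence of the $\eta_{ij}$'s in $j$, so a Chebyshev bound together with a union bound over the polynomially many time steps that are needed gives a contribution $\lesssim \kappa_i^{1/2}\Delta^{-1/2+\delta}$ almost surely in $\mathcal G(\bm w)$; this already produces the $\kappa_i^{1/2}$ prefactor in the claimed error. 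The remaining deterministic piece equals $\kappa_i \sum_j w_j u_p(x_j)/\sum_j w_j$ and splits into hubs and low-degree nodes. The hub-hub part is bounded by $\kappa_i\ell\Delta\|u_p\|_0/\sum_j w_j \lesssim \Delta^{\theta-1}$ via (\ref{DeltaRho}) and is absorbed into the error precisely because $\theta<1-\gamma\nu/2$.

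The core of the proof is therefore the low-degree weighted average $\sum_{j>\ell} w_j u_p(x_j(t))/\sum_j w_j$. By the concentration proposition, $k_j\lesssim \Delta^{1-\gamma}$ almost surely for $j>\ell$, so the coupling felt by node $j$ has sup-norm $\lesssim \Delta^{-\gamma}$ and $x_j$ evolves as an orbit of $f$ subject to an $O(\Delta^{-\gamma})$ perturbation. The analytic input I would use is the spectral gap of the Perron-Frobenius operator of $f$ on the cone of $(a,\nu)$-log-H\"older densities: for $t\ge u(\varphi)$ the unperturbed density has contracted onto a small neighbourhood of the $f$-invariant density $\varphi_0$, and a perturbation argument using the $C^{1+\nu}$ regularity of $f$ then shows that the marginal density of $x_j(t)$ is close to $\varphi_0$ with bias controlled by $\Delta^{-\gamma\nu}$. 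Combined with a variance estimate for the weighted sum — in which the two-point correlations between distinct $x_j(t)$ are themselves $O(\Delta^{-\gamma})$ by the same weak-coupling heuristic, giving a typical $L^2$ deviation of order $\Delta^{-\gamma/2}$ — one interpolates to a pointwise error of order $\Delta^{-\gamma\nu/2+\delta}$, valid outside a set of $\mu^n$-measure tending to zero.

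The hardest step is clearly this last one: tracking how a time- and state-dependent coupling affects the statistics of the low-degree nodes while keeping the H\"older exponent $\nu$ explicit. A natural route is to work with the full Perron-Frobenius operator of the coupled system on a tensor space of log-H\"older densities and treat the coupling as a small perturbation of the product operator $\bigotimes_j \mathcal L_f$; this simultaneously produces marginal convergence to $\mu_0$ and the two-point correlation bound needed in the variance estimate, and it is exactly here that the exponent $\gamma\nu/2$ (rather than $\gamma\nu$) arises, through interpolation between the almost-sure bias $\Delta^{-\gamma\nu}$ and the $L^2$ deviation $\Delta^{-\gamma/2}$. A final union bound over the $\ell\lesssim \Delta^\theta$ hubs and over polynomially many time steps, permissible precisely because of $\theta<1-\gamma\nu/2$, then packages everything into $\|\zeta_i\|_0\lesssim \kappa_i^{1/2}\Delta^{-\gamma\nu/2+\delta}$ for every hub and every $t\ge u$.
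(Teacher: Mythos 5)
Your overall architecture matches the paper's: exploit the tensor form of $h$ to reduce everything to the local fields $\Delta^{-1}\sum_j A_{ij}\psi(x_j(t))$, show these concentrate at $\kappa_i\int\psi\,d\mu_0$ by combining graph concentration with transfer-operator perturbation theory for the low-degree nodes, and collect the deviations into $\zeta_i$. Your identification of the hub--hub contribution as the place where $\theta<1-\gamma\nu/2$ enters is also correct in spirit (in the paper it appears as the competition $\max\{\kappa_i^2\Delta^{-\gamma\nu},\Delta^{-2(1-\theta)}\}$ inside the variance of the local field).

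However, the core quantitative step --- the derivation of the exponent $\gamma\nu/2$ --- is not correct as you describe it, and this is a genuine gap. You posit a two-point correlation bound $\mathrm{Cov}(\psi(x_j(t)),\psi(x_k(t)))=O(\Delta^{-\gamma})$ ``by the weak-coupling heuristic'', hence an $L^2$ deviation $\Delta^{-\gamma/2}$, and then claim that ``interpolating'' this with the bias $\Delta^{-\gamma\nu}$ yields $\Delta^{-\gamma\nu/2}$. Arithmetically this does not work: a bias and a fluctuation add (one takes the larger of the two), and $\max\{\Delta^{-\gamma\nu},\Delta^{-\gamma/2}\}$ equals $\Delta^{-\gamma\nu/2}$ only when $\nu=1$. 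Moreover the correlation bound $O(\Delta^{-\gamma})$ is not what the perturbation of $\bigotimes_j\mathcal{L}_f$ actually delivers: the coupling displaces preimages by $O(\Delta^{-\gamma})$, and since the Jacobian of $f$ and the densities in the cone $C(a,\nu)$ are only $\nu$-H\"older, the perturbed transfer operator differs from the unperturbed one by $O(\Delta^{-\gamma\nu})$ in $C^0$; consequently the pair correlations are $O(\Delta^{-\gamma\nu})$ --- this is exactly the content of the paper's perturbation lemma and the covariance part of its low-degree-node proposition. The exponent $\gamma\nu/2$ then arises in one stroke: the variance of $\Delta^{-1}\sum_{j>\ell}A_{ij}\psi(x_j)$ is dominated by the $\sim(\kappa_i\Delta)^2$ off-diagonal covariances, each of size $\Delta^{-\gamma\nu}$, giving $\kappa_i^2\Delta^{-\gamma\nu}$, and Chebyshev contributes the factor $1/2$. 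Two smaller points: (i) your attribution of $\theta<1-\gamma\nu/2$ to ``permitting a union bound over the $\Delta^{\theta}$ hubs'' is misplaced --- a union bound over $\Delta^{\theta}$ events each of probability $\Delta^{-2\beta}$ with $\beta$ small would not close; the condition is needed to make the hub--hub variance term subdominant to $\Delta^{-\gamma\nu}$. (ii) Your decomposition $A_{ij}=p_{ij}+\eta_{ij}$ played against $u_p(x_j(t))$ tacitly treats the graph entries as independent of the states, which fails for $t\ge 1$ since the trajectories depend on the realized graph; the cleaner route (the paper's) is to condition on a graph with good concentration of the purely combinatorial quantities $k_i$ and $Y_i=\Delta^{-2}\sum_{j,k}A_{ij}A_{ik}$ and run all remaining probability over $\mu^n$ alone.
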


\begin{OBS} If  $0<\theta < 1 - \gamma \nu /2$ is not satisfied the function $\zeta$ stills 
converges to zero as $\Delta \rightarrow \infty$. The speed of convergence will 
then depend on the parameters of the network, see Proposition \ref{emf}. 
\end{OBS} 
 
Our result has a mean field flavor. Loosely speaking, the hubs interact with their local 
mean field. As the network is random the local mean field perceived by the hubs is a fraction of the 
total mean field with weights given by the normalized degrees $\kappa_i$. 
On the other hand, 
the mean field is determined by the invariant measure $\mu_0$ of the isolated dynamics.  
The function ${\zeta}_i$ describes the noise-like disturbance on the motion of the hubs.

{\it $\mu^n-$almost every initial condition:} The claim concerning the reduction holds 
almost surely with respect to $\mu$. Indeed, we could arrange 
the initial conditions into fixed points of the coupled maps and consider
the particular case that  $h(x,y) = h(x-y)$ and $h(0) = 0$. 
Then the reduction obviously fails in the  realization where $x_1 = \cdots = x_n = \tilde x$. 
In this situation the dynamics of the hubs are given by 
isolated dynamics, and not by our reduction. 
This example shows that one needs to remove  a zero measure set of 
initial conditions

{\it Almost surely every graph:} The assertion that the theorem holds for almost every graph is more subtle. We control the concentration inequalities
of relevant quantities associated with our results. But with small probability close to zero some pathological 
networks may appear. In such exceptional networks the mean field reduction may not hold. We now discuss a particular
example of such a pathology. Recall that the networks we consider are oriented. However, $\mathbb{E}_{\bm{w}}(k_i) = \mathbb{E}_{\bm{w}}(k_i^{out})$, 
which implies that statistically for a given node the number of incoming and outgoing are the same.  
Let us consider a situation $k_1 = 0$ and $k_1^{out} = \Delta$. We present a pictorial representation of such network in Fig. \ref{OrientedStar}. 
Clearly the dynamics of this hub will not be described by our reduction Theorem, as
it does not receive input from its neighbors and therefore acts as an isolated node.
Notice that 
$$
\mbox{Pr}_{\bm{w}}( k_1=0) = \Pi_{j=1}^n (1 - p_{1j})
$$
together with 
$1 - \Delta w_j \rho \le 1 - \Delta^{\gamma}/n $,  which implies that
$$
\mbox{Pr}_{\bm{w}}( k_1=0) \lesssim e^{- \Delta^{\gamma} }.  
$$
This shows that the probability to find such networks converges to zero as $\Delta$ grows.  
\begin{figure}[htbp] %  figure placement: here, top, bottom, or page
   \centering
   \includegraphics[width=2in]{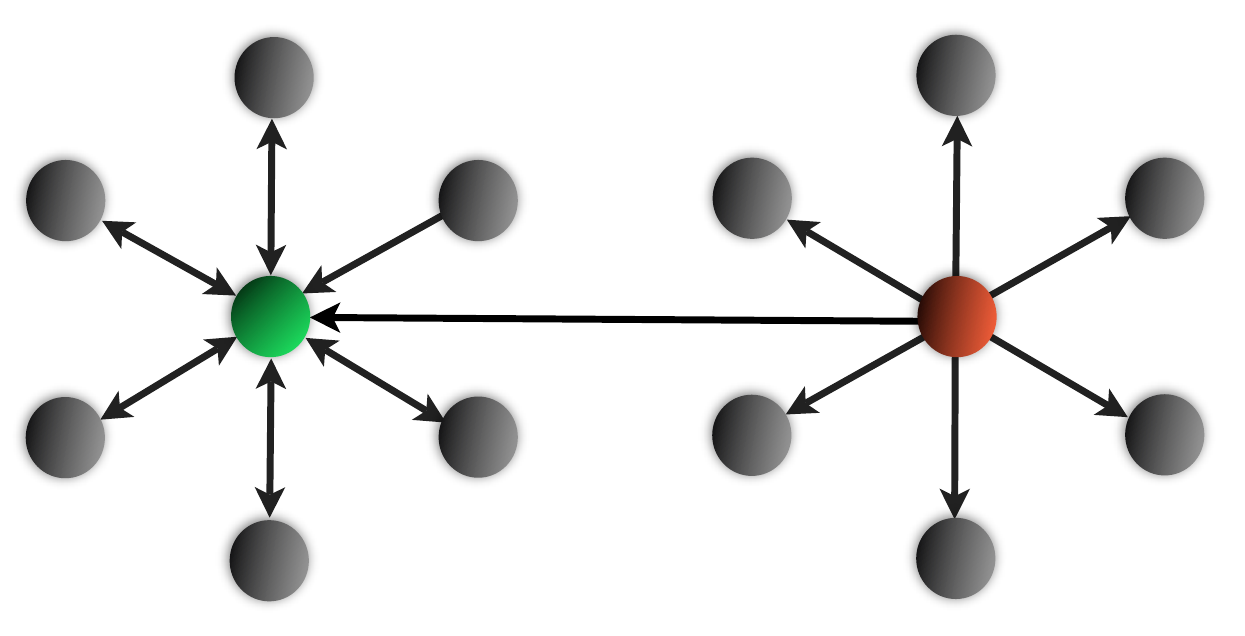} 
   \caption{A network with a fully oriented hub. The hub only exerts influence on its neighbors, but 
   does not receive input from them. Networks with such property form a set of small measure.     
   }
  \label{OrientedStar}
\end{figure}

More generically, due to the concentration inequality described in Proposition \ref{ConIn} the degrees only differ 
from $w_i$ by a amount proportional to $w_i$ in a set of networks of small measure.

\section{Examples of Reductions}
~

We simulate the scenario from the Main Theorem 
using the network model in Example 1 from Section \ref{Nets}, with  $n=2 \times 10^4$, $\ell = 2$ and $\kappa_1 =  1$ and $\kappa_1 \ge \kappa_2 \ge 0 $. We take $w_i = 7$ for $2< i \le 4000$. 
Moreover, we consider $\sigma < 1/2$.  This network can be thought of as composed of a Erd\"os-R\'enyi layer  corresponding 
to $n> i > \ell$ and another layer of highly connected nodes. A pictorial representation of 
such network with  $\ell=2$ and $n=10$ can be seen in Fig. \ref{Illustration}
\begin{figure}[htbp] %  figure placement: here, top, bottom, or page
   \centering
   \includegraphics[width=2in]{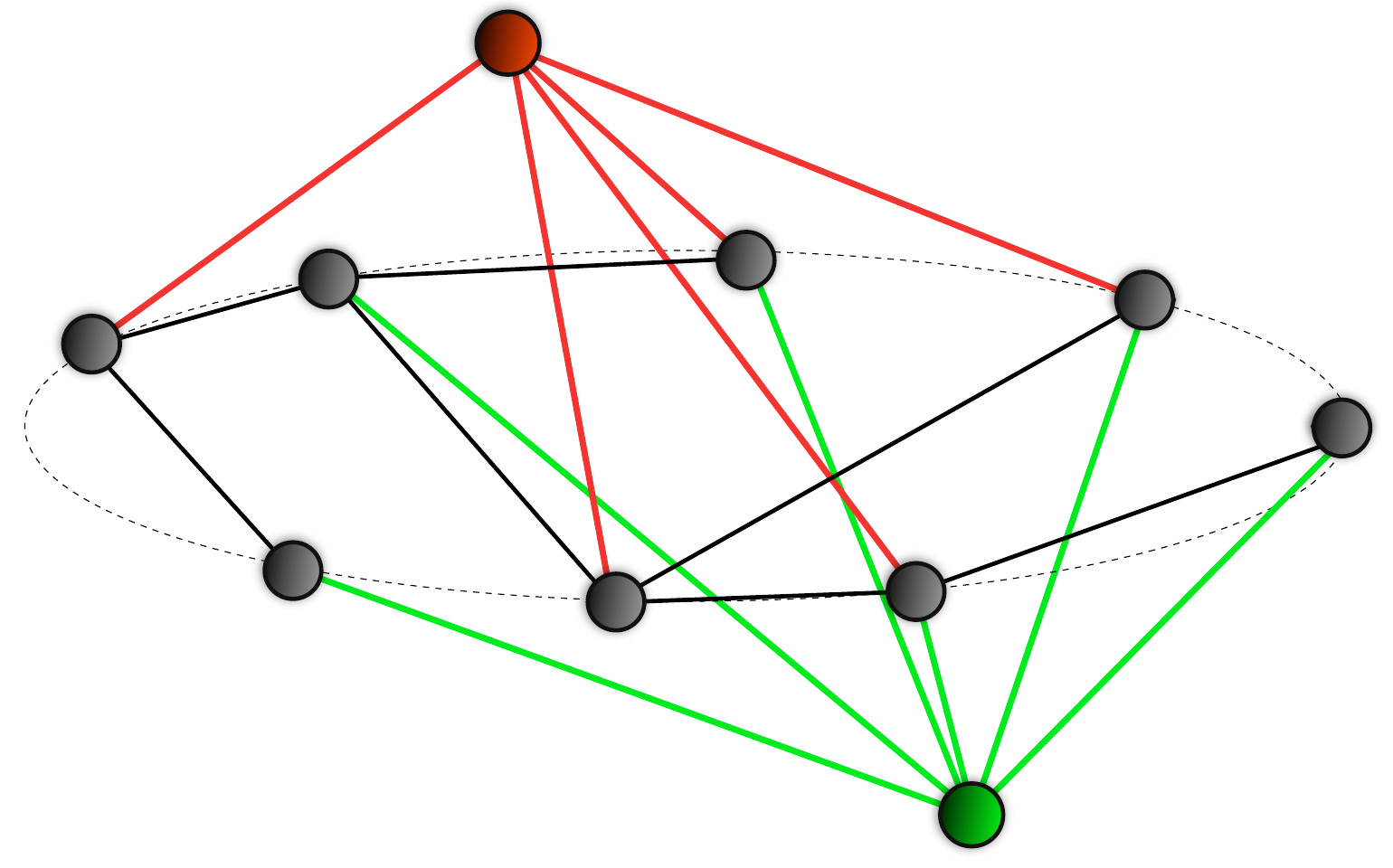} 
   \caption{A pictorial representation of the network described in Example 1 with  $\ell=2$ and $n=10$   }
  \label{Illustration}
\end{figure}

We take the Bernoulli map 
$$
f(x) = 2 x  \, \mbox{   mod} 1
$$
to model the isolated dynamics. This model corresponds to a case with H\"older exponent $\nu=1$.  
The Lebesgue measure $m$ is invariant for the localized system. 
This means that $\mu_0 = 1$. 
Another interesting property of the system is stochastic stability, which means that adding a 
$\delta$-small uncorrelated noise in the evolution the dynamics can still be described by an invariant
measure $\mu_{\delta}$. Moreover, $\mu_{\delta}$ is uniformly close to $m$.  It is easy to see that 
all our results still remain true if this small noise is included. 

Hence, to avoid round-off numerical problems associated with the map $x\mapsto 2x$, 
we introduce a small additive noise $\xi$ with uniformly distributed 
with  support $[0,10^{-5}]$. Hence, the 
isolated dynamics under the influence of this small noise 
is given by 
$$
x(t+1) = 2 x(t) + \xi(t) \, \mbox{   mod} 1
$$
We wish to explore two coupling functions and their consequences for the hub dynamics, 
focusing in particular on the
collective properties of the hubs. To this end, we introduce the following  coherence measure. 
Given points  $x_1(t), x_2(t) \in M$,  with $t \in \{ 1, \dots ,T \}$, we define
the {\em coherence} $r$ between the hubs by
\begin{equation}\label{r}
r e^{i \psi}= \frac{1}{ T } \sum_{j=1}^T e ^{2\pi(  x_1(t)  - x_2(t))}.
\end{equation}

\subsection{Hubs decouple from the network.} 
Consider the coupling function 
\begin{equation}
h (y-x) = \sin 2\pi (y-x).
\label{eq:sin}
\end{equation}
Our reduction technique  renders the following interaction function for the hubs
$$
g(x) = \int_0^{1} h (y-x) dy = 0.
$$ 
According to the Main Theorem, the coupling equation
(\ref{eq:sin}) gives the reduced equations 
\begin{equation}
x_i(t+1) = 2 x_i(t) + \alpha \zeta_i   \mbox{ ~ mod 1}, \, \mbox{ for }i=1,2 \mbox { ( $\ell=2$) }
\end{equation}
 and where $\zeta_i$ is the noise term. 
Hence, the hubs effectively decouple from the network, as there is no interaction with the hubs with the mean field. 
The only effect of the network  on the hubs is a noise-like term $\zeta_i$
and the parameter $\alpha$ only appears in the noise term.
The stochastic stability of the Bernoulli family implies that for $T$ large enough $r \approx 0$.

First, for a fixed $\Delta=260$ and $\kappa_2=0.99$ we compute $r$ as a function of $\alpha$. In the computation 
of the coherence measure $r$ we discard the first $10^3$ iterates and consider $T = 10^3$.
As predicted no coherence is attained, and the behavior of $r$ as a function of $\alpha$ is flat.  
The results are presented in Fig. \ref{Alpha0}a).  In   Fig. \ref{Alpha0}b)  the time series of 
$| \cos 2\pi x_1(t)  - \cos 2\pi x_2(t) |$ are shown.  

\begin{figure}[htbp] %  figure placement: here, top, bottom, or page
   \centering
   \includegraphics[width=5in]{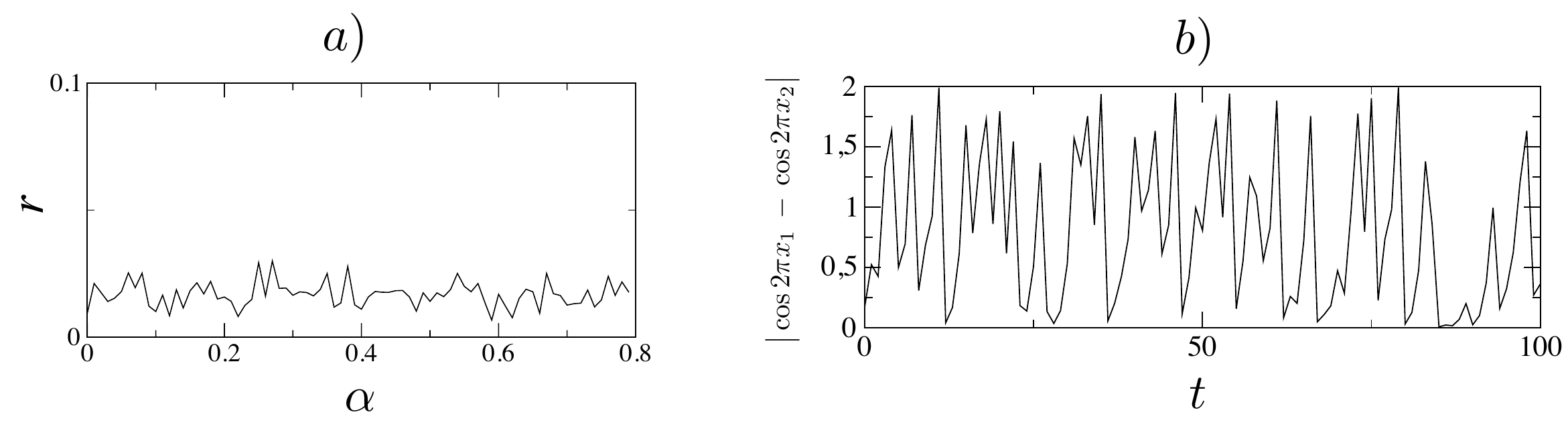} 
   \caption{No coherent dynamics between hubs. In a) we show the coherence measure $r$ as a function of alpha. In b) 
    the time series  of  $| \cos 2\pi x_1(t)  - \cos 2\pi x_2(t) |$.  
   }
  \label{Alpha0}
\end{figure}

\subsubsection{Effects of $\Delta$ on the fluctuations}

We now perform a set of simulations to study the scaling relations between $\zeta$ and $\Delta$. 
Notice that in the model of  Example 1 we take 
$\gamma=1$, hence our Theorem predicts a scaling as
$\zeta \lesssim \Delta^{-1/2}$. 

In this set of experiments we vary $\Delta$, recall that  $n=2 \times 10^4$ and the low degrees $w_i=7$ for $2 < i \le n$ are fixed. 
We also fix $\alpha = 0.1$, and consider only the fluctuations $\zeta_1$ on the main hub $x_1$. For simplicity of notation we shall write
$\zeta = \zeta_1$.    For each experiment we compute the quantity
$$
\langle | \zeta | \rangle  = \frac{1}{T} \sum_{t=1}^T | \zeta(t) |
$$
where $T=10^3$. This mean value of the modulus of the fluctuations $\zeta$ must have the same 
scaling relation $\langle | \zeta | \rangle \lesssim \Delta^{-1/2}$. For each $\Delta$ we construct the 
corresponding network only once and measure $\langle | \zeta | \rangle$. This 
implies that we do not
average $\langle | \zeta | \rangle$ over the network ensemble, as the networks have good concentration 
properties. The simulation results are presented
in Fig. \ref{ZetaDelta} and are in agreement with the predictions
\begin{figure}[htbp] %  figure placement: here, top, bottom, or page
   \centering
   \includegraphics[width=2.2in]{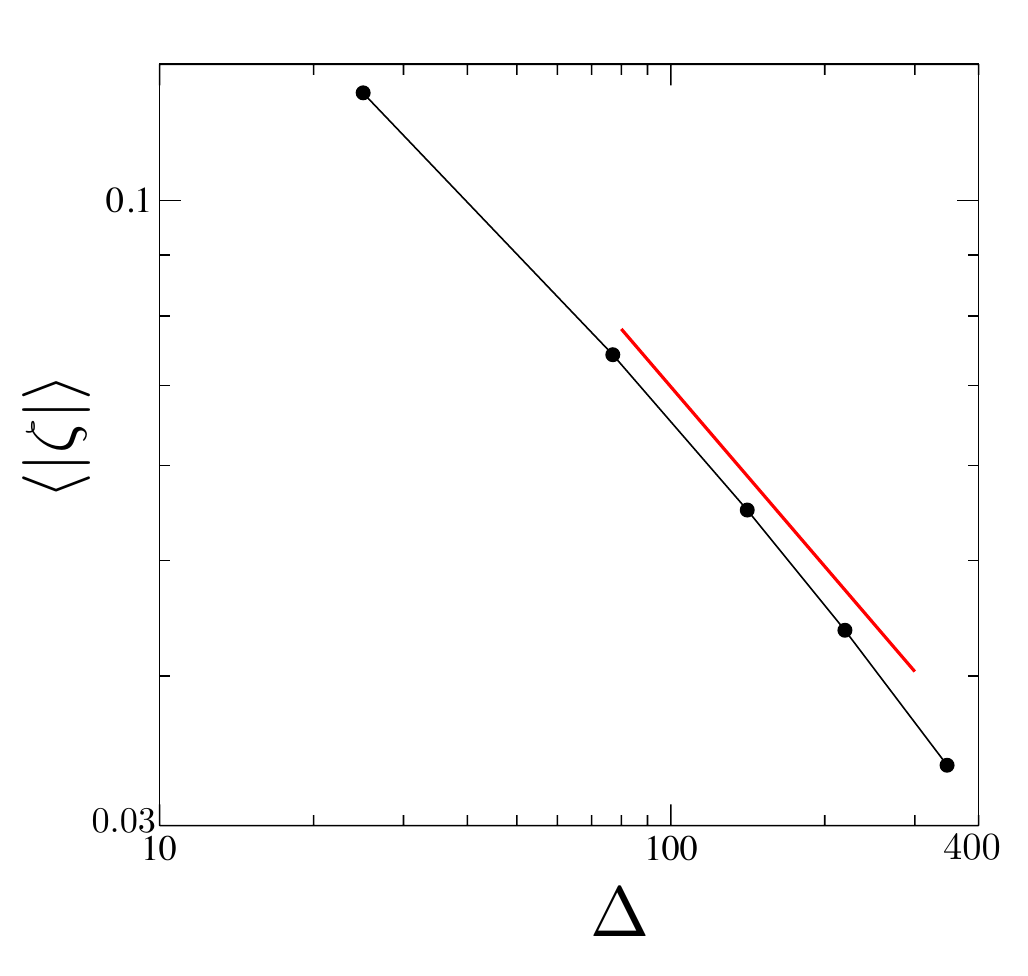} 
   \caption{Scaling relation between the mean value of the modulus of the fluctuations $\langle | \zeta | \rangle$ and the 
   expected degree $\Delta$. The simulation results are shown in black full points and a curve with 
   scaling $\Delta^{-1/2}$ is shown in red (full bold line). Our Theorem predicts a scaling relation 
   $\langle | \zeta | \rangle \lesssim \Delta^{-1/2}$, which is in agreement with the simulation results. 
   }
  \label{ZetaDelta}
\end{figure}

\subsubsection{Effects of the normalized degree $\kappa$ on the fluctuations}

As before $n$  and the $w_i$'s for the low degree nodes are fixed. Now  we also fix  $\Delta = 347$  and vary $\kappa_2$ to study the scaling relations between $\zeta$ and $\kappa$. Our Theorem predicts a scaling as $\zeta \lesssim \kappa^{1/2}$. We then compute the mean value of the modulus of the fluctuations $\langle | \zeta_2 | \rangle$.  In this subsection, for simplicity of notation we shall write  $\langle | \zeta | \rangle = \langle | \zeta_2 | \rangle$ and $\kappa = \kappa_2$.  Again, for each $\kappa$ we construct the corresponding network only once and measure $\langle | \zeta | \rangle$. The simulation results are presented in Fig. \ref{ZetaKappa} and is in agreement with the predictions
\begin{figure}[htbp] %  figure placement: here, top, bottom, or page
\centering
\includegraphics[width=2.5in]{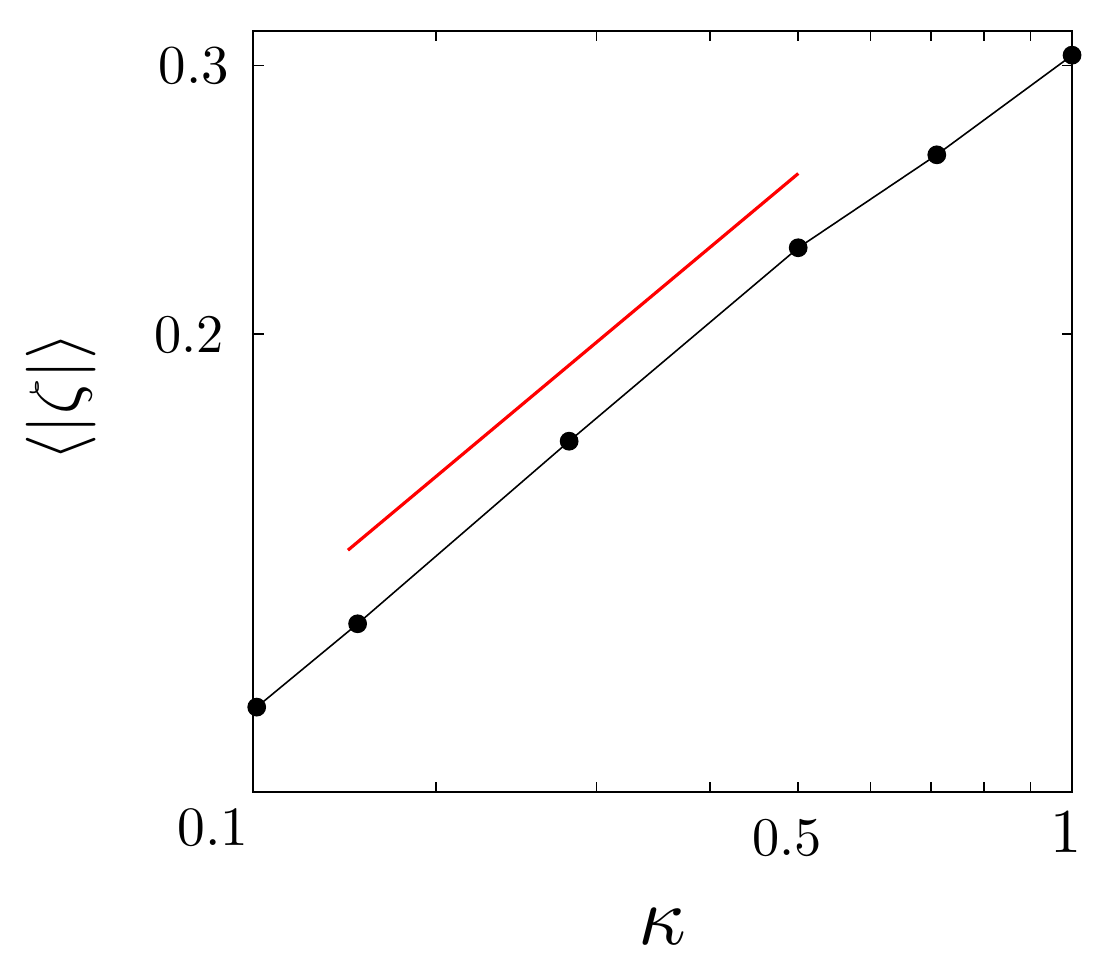} 
  \caption{Scaling relation between the mean value of the modulus of the fluctuations $\langle | \zeta | \rangle$ and   	  normalized  degree $\kappa$. The simulation results are shown in black full points and a curve with    scaling $\kappa^{1/2}$ is shown in red (full bold line). For fixed network parameters $n,\Delta$  Our Theorem predicts a scaling relation $\langle | \zeta | \rangle \lesssim \kappa^{1/2}$, which is in agreement with the simulation results. }
\label{ZetaKappa}
\end{figure}

\subsection{Reduction Reveals Coherent Behavior}  

The reduction reveals that coherent dynamics
can be caused by cancellations due to the nature of the
coupling functions and the dynamics of the isolated 
dynamics (in terms of the invariant measure). 
Depending on the coupling function the hubs may exhibit a coherent behavior for a range of 
coupling strengths $\alpha$. 
Here, we illustrate such scenario. Again, we consider the network from Example 1, with $n=2 
\times 10^3$, $w_i = 7$ for $2< i \le n$. We fix
$\Delta = 347$, with $\kappa_1 = 1$ an $\kappa_2 = 1$.
Then we construct one realization of such a network. The coupling function 
\begin{equation}
h(x,y) = \sin 2 \pi y - \sin 2 \pi x.
\label{eq:sinsin}
\end{equation}

We performed extensive numerical simulations to compute the coherence measure for the two hubs $x_1 $ and $x_2$ as a function of the coupling strength $\alpha$.  The result can be seen in Fig. \ref{r}. The parameter $r$, see (\ref{r}) has an intricate dependence on $\alpha$. Such behavior can be 
uncovered by our reduction techniques. 
\begin{figure}[htbp] %  figure placement: here, top, bottom, or page
   \centering
   \includegraphics[width=2.5in]{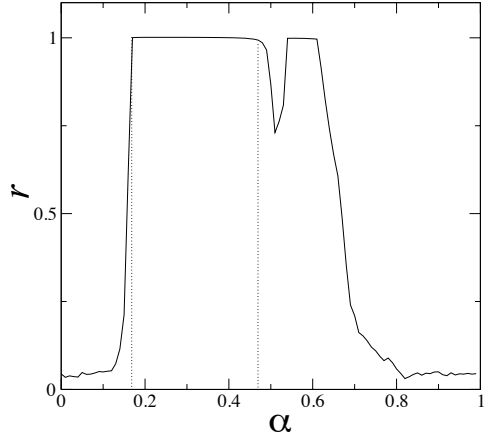}
   \caption{The coherence parameter $r$ versus the coupling strength $\alpha$. The high values of 
   $r$ corresponds to regimes where the dynamical of the hubs $x_1$ and $x_2$ are correlated. The onset of coherent
   regimes can be predicted by the reduction techniques, e.g., the large coherent plateau is given by (\ref{alphaB}). \label{fig:r}}
\end{figure}

Applying the Main Theorem with the coupling (\ref{eq:sinsin}) yields 
the following effective equations for the  hubs dynamics 
\begin{equation}
{x}_i(t+1) = 2  x _i(t)) + \alpha  \sin 2 \pi x_i + \alpha{\zeta}_i(t) \mbox{ mod 1 },
\label{eq:sinsinco}
\end{equation}
Let us neglect the finite size fluctuations for a moment. Then the dynamics of the hubs are described by the 
following equation
\begin{equation}\label{EsZ}
{x}(t+1) = 2 x(t) + \alpha \sin 2 \pi x   \mbox{ ~  mod 1},
\end{equation}
and so the parameter $\alpha$ determines the dynamics significantly. 
Indeed (\ref{EsZ})  has a trivial fixed point $x=0$ (identified with $1$) for all $\alpha$,
but  this fixed point is only stable for 
\begin{equation}
\frac{1}{2 \pi } < \alpha  < \frac{3}{2 \pi}.
\label{alphaB}
\end{equation}
In this range if the initial conditions for the hubs start in a vicinity of $0$ then they 
will remain there for all future times. 
This will correspond to a trivial coherent behavior. 
Hence, while the low degree behaves in an erratic fashion the hubs, although isolated 
chaotically, will stay in a steady state. This regime corresponds to large  values of 
the coherence measure $r$. On the other hands, when $\alpha\notin (\frac{1}{2 \pi },  \frac{3}{2 \pi})$ this fixed point becomes repelling and indeed
the second plateau in Fig. \ref{fig:r} corresponds to a stable periodic orbit of period two. 
Examples of such dynamics can be observed in Fig. \ref{AlphaG}

\begin{figure}[htbp] %  figure placement: here, top, bottom, or page
   \centering
   \includegraphics[width=4in]{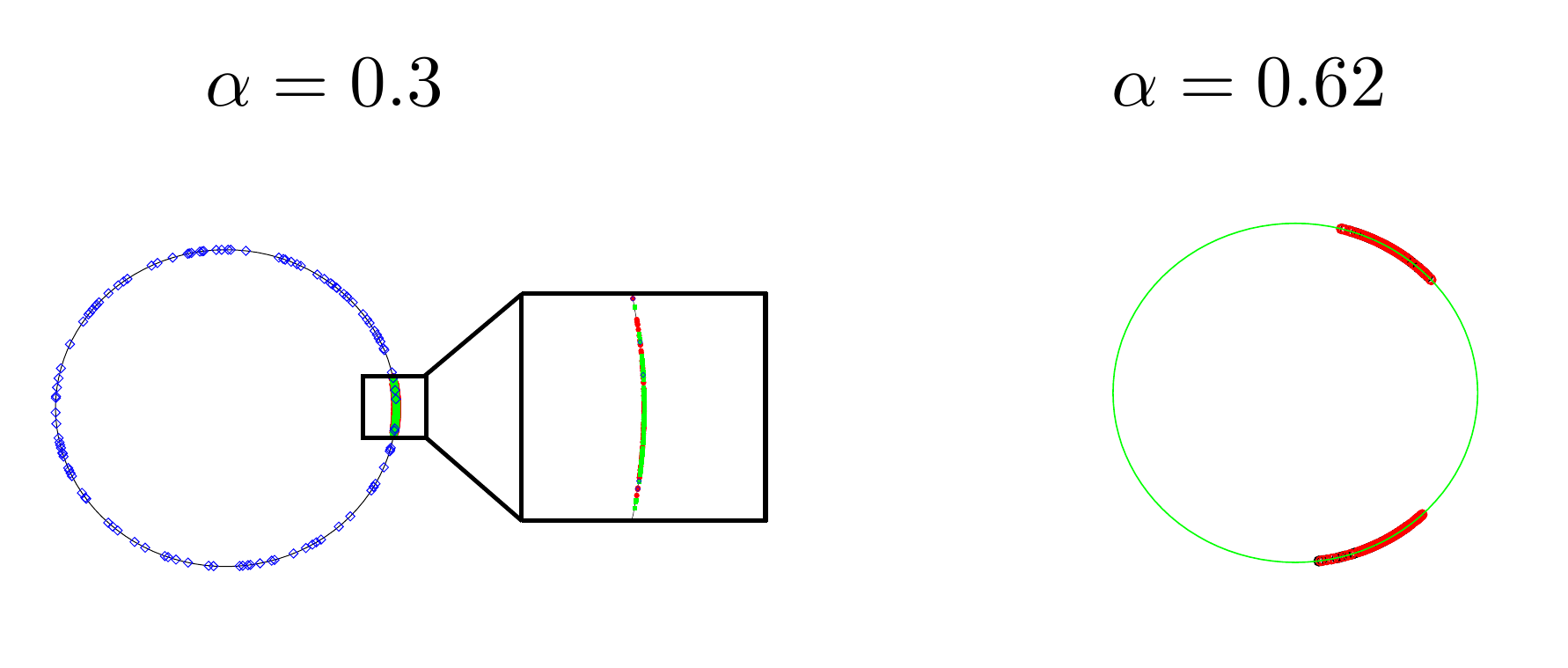} 
   \caption{ Coherent dynamics of the hubs. For $\alpha=0.3 $ we depict the trajectories of the hubs
   $x_1$ (solid circles) and $x_2$ (solid squares) along with the trajectories of the low degree node $x_{400}$ (diamonds). The trajectories are shown 
   in the circle. The trajectories of the hubs stay 
   close to an equilibrium point, whereas the trajectories of the low degree nodes spread over the whole circle. For $\alpha = 0.62$ we depict the trajectories of the hubs
   in the circle. They display a coherent periodic motion of period two.}
  \label{AlphaG}
\end{figure}

\section{Dynamics of Low degree Nodes}

\subsection{Isolated Nodes}\label{IsoN}
The proof of the Main Theorem is based on the ergodic properties of $f$ and is derived from the 
properties of the Peron-Frobenius operator (acting on some convenient Banach space).
To prove that this operator has fixed points, 
we use standard technique based on the notion 
of the projective metric associated with a convex cone in a vector space. 
We follow closely the exposition in Ref. \cite{Viana}. We use the distance $d$ induced by the riemannian metric. 

Let $E = C^0(M,\mathbb{R})$ be the space of continuous real valued functions 
defined on $M$. Let $C = C(a,\nu)$ be the convex cone of functions  $\varphi \in E$ such that
\begin{itemize}
\item[i)] $\varphi(x) >0$ for all $x\in M$ and 
\item[ii)] $\log \varphi$ is $(a,\nu)$-H\"older continuous on $\rho_0$ neighborhoods.
\end{itemize}
This last condition means that for all $x,y \in M$ such that $d(x,y) \le \rho_0$  then 
$$
e^{-ad(x,y)^{\nu}} \le \frac{\varphi(x)}{\varphi(y)} \le e^{a d(x,y)^{\nu}}.
$$

The projective metric (Hilbert metric) is introduced as follows given $\varphi_1, \varphi_2 \in C$, define
$$
\alpha (\varphi_1 ,\varphi_2 ) = \sup\{ t > 0 : \varphi_2 - t \varphi_1 \in C\},
$$
and
$$
\beta(\varphi_1 ,\varphi_2) = \inf\{ s > 0 : s \varphi_1 - \varphi_2 \in C\}.
$$
Then, 
\begin{equation}\label{theta}
\theta(\varphi_1 , \varphi_2 ) = \log \frac{\beta(\varphi_1, \varphi_2)}{\alpha(\varphi_1,\varphi_2)}
\end{equation}
is a metric in the projective quotient of $C$. 

Statistical properties of the isolated dynamics can be obtained
by means of the transfer operator $\mathcal{L}$. Let 
$\varphi : M \rightarrow \mathbb{R}$, the operator $\mathcal{L}$ is defined by 
$$
(\mathcal{L} \varphi)(y) = \sum_{f(x) = y} \frac{\varphi(x)}{ | \mbox{det} Df (x)| }
$$

Given a measure $\mu$ absolutely continuous 
with respect to the Lebesgue measure $m$, that is, given $A \subset M$ one has 
$\mu(A) = \int_A \varphi dm $, for an integrable function $\varphi$, then the pushforward of $\mu$ 
under $f$ has the duality property 
\begin{eqnarray}
(f_* \mu)(A) 
&=&\int_A \mathcal{L} \varphi dm.
\end{eqnarray}
\noindent
The fixed points $\varphi_0$ of the linear operator $\mathcal{L}$ are $f$-{\em invariant}
absolutely continuous probabilities measures. Conversely, if a $f$-invariant
probability $\mu_0$ is absolutely continuous with respect to $m$ then 
there exists the Radon-Nikodym derivative $d \mu_0 /dm = \varphi_0$ and 
$\mathcal{L} \varphi_0 = \varphi_0$.

The domain of $\mathcal{L}$ plays a vital role. Another important point is that the metric space
$(C(a,\nu),\theta)$ is not complete. Nonetheless, $\mathcal{L}$
acting on $C(a,\nu)$ is a contraction with a unique fixed point. 

\begin{propo}\label{Lfp}
The operator $\mathcal{L} : C(a,\nu) \rightarrow C(a,\nu)$ is a contraction with respect 
to the projective metric $\theta = \theta_{a,\nu}$ associated with the convex cone $C(a,\nu)$.
Moreover, $\mathcal{L}$ has a unique fixed point $\varphi_0 \in C(a,\nu)$. 
\end{propo}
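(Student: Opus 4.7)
The plan is to apply Birkhoff's theorem on projective cones: if a positive linear operator $T$ sends a cone $C$ into a subcone $C'\subset C$ of finite diameter measured in the Hilbert metric of $C$, then $T$ is a strict $\theta$-contraction with factor $\tanh(\mathrm{diam}_\theta(C')/4)<1$. Accordingly, the main task is to show that (perhaps after an auxiliary enlargement of $a$ so that the inequality below closes, or after replacing $\mathcal{L}$ by a suitable iterate) $\mathcal{L}$ maps $C(a,\nu)$ into a strictly smaller subcone $C(a',\nu)$ with $a'<a$.

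For the cone invariance, I would estimate $(\mathcal{L}\varphi)(x)/(\mathcal{L}\varphi)(y)$ for $\varphi\in C(a,\nu)$ and $d(x,y)\le \rho_0$. The expansion hypothesis $\|Df\cdot v\|\ge \sigma\|v\|$ provides branchwise local inverses that pair each preimage $z$ of $x$ with a unique preimage $w$ of $y$ satisfying $d(z,w)\le \sigma^{-1}d(x,y)$. The $(a,\nu)$-H\"older bound on $\log\varphi$ combined with the $C^{1+\nu}$ regularity of $f$ (which yields an $(L,\nu)$-H\"older bound on $\log|\det Df|$ for some $L$) gives, branch by branch,
\[
\frac{\varphi(z)/|\det Df(z)|}{\varphi(w)/|\det Df(w)|}\le e^{(a+L)d(z,w)^\nu}\le e^{(a+L)\sigma^{-\nu}d(x,y)^\nu}.
\]
Since a ratio of sums of positive reals is bounded by the maximum termwise ratio, the same bound propagates to $(\mathcal{L}\varphi)(x)/(\mathcal{L}\varphi)(y)$. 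With $a>L/(\sigma^\nu-1)$ one has $a':=(a+L)\sigma^{-\nu}<a$, so $\mathcal{L}(C(a,\nu))\subset C(a',\nu)$. A direct computation in the spirit of Viana's notes then bounds the $\theta_{a,\nu}$-diameter of $C(a',\nu)$ inside $C(a,\nu)$ by a finite constant depending on $a,a',\nu,\rho_0,\sigma,L$, and Birkhoff's theorem delivers the contraction.

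For existence of the fixed point I would use that on the normalized slice $S=\{\varphi\in C(a,\nu):\int\varphi\,dm=1\}$ the projective metric controls the $C^0$-norm: membership in $C(a,\nu)$ together with $L^1$-normalization on the compact $M$ produces uniform upper and lower bounds on $\varphi$, from which one checks $\|\varphi_1-\varphi_2\|_0\lesssim \theta(\varphi_1,\varphi_2)$ on $S$. Hence the normalized iterates $\varphi_n:=\mathcal{L}^n\varphi/\int\mathcal{L}^n\varphi\,dm$ are $\theta$-Cauchy, so $C^0$-Cauchy with a continuous positive limit $\varphi_0$; pointwise log-H\"older control is a closed condition, so $\varphi_0\in C(a,\nu)$, and $C^0$-continuity of $\mathcal{L}$ together with the contraction gives $\mathcal{L}\varphi_0=\varphi_0$. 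Uniqueness then follows from the contraction factor $\lambda<1$. The main obstacle, as the text foreshadows, is that $(C(a,\nu),\theta)$ is not complete, so Banach's theorem does not apply directly; the bridge from projective contraction to an honest fixed point is precisely the normalization step that turns $\theta$-Cauchy sequences into $C^0$-Cauchy sequences inside a closed subset of $E$.
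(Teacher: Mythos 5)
Your argument is correct and is essentially the paper's own proof: the paper simply cites Propositions 2.2, 2.4 and 2.6 of Viana's notes, and your cone-invariance estimate, Birkhoff contraction, and normalization step to circumvent the incompleteness of $(C(a,\nu),\theta)$ are exactly the content of those cited results (with the same standing caveat that $a$ must be taken large enough that $a' = (a+L)\sigma^{-\nu} < a$).
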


\begin{proof}See Propositions 2.2, 2.4 and 2.6 cf. \cite{Viana}. 
\end{proof}

\subsection{Transfer Operator of the Low degree Nodes}

We study the dynamics of the low degree nodes as perturbations 
of the isolated dynamics, and then obtain the statistical properties of the low degree
in terms of perturbation results of the transfer operator.

To this end, we show that for almost every network, low degree nodes can be viewed as a  perturbation. 
Notice that 
\begin{equation}
{x}_i(t+1) = { f}({ x}_i(t)) + \frac{\alpha}{\Delta} \sum_{j=1}^n A_{ij} h(x_j(t), x_i(t)),\,\,  i=1,\dots,n
\label{md1a}
\end{equation}
can be rewritten 
as 
\begin{equation}
{x}_i(t+1) = { f}({ x}_i(t)) + \alpha r_i(x(t),y_i(t)),\,\,  i=1,\dots,n
\label{md1b}
\end{equation}
where the coupling term taking into account  (\ref{h}) is represented as
$$
r_i(x_i,y_i) = \sum_{p,q}  u_p(x_i) y_{q,i}  
$$
with
$$
y_{q,i} =  \frac{1}{\Delta} \sum_j A_{ij}  v_q(x_j). 
$$

Our next result guarantees that the coupling term can be made uniformly small as $\Delta$ increases.

\begin{propo}\label{rDelta}
Let $\mathcal{G}(\bm{w})$ be strongly heterogeneous.  Then  the coupling term 
$r_i$ viewed as a mapping $x \mapsto r_i(x,y_i)$ is $C^{1+ \nu}$, and  for every low degree node $\ell < i \le n$ 
the coupling term satisfies 
$$
\| r_i (x_i, y_i ) \| \lesssim \Delta^{-\gamma} 
$$
for almost every network in $\mathcal{G}(\bm{w})$.
\end{propo}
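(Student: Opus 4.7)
The plan is to exploit the finite-sum structure of the interaction (\ref{h}) together with the concentration proposition already proved. Since
$$
r_i(x_i,y_i)=\sum_{p,q=1}^k u_p(x_i)\, y_{q,i},\qquad y_{q,i}=\frac{1}{\Delta}\sum_{j=1}^n A_{ij} v_q(x_j),
$$
and $k$ is a fixed integer, it suffices to produce a uniform bound on each $y_{q,i}$ of order $\Delta^{-\gamma}$ and to observe that the regularity of $x\mapsto r_i(x,y_i)$ is inherited directly from the regularity of the $u_p$.

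The uniform bound on $y_{q,i}$ is immediate from the trivial pointwise estimate
$$
|y_{q,i}|\;\le\;\frac{\|v_q\|_0}{\Delta}\sum_{j=1}^n A_{ij}\;=\;\frac{\|v_q\|_0\,k_i}{\Delta},
$$
so everything reduces to an upper bound on the degree $k_i$ of a low-degree node. Here I would invoke the Concentration Proposition (Proposition on $|k_i-w_i|< w_i^{1/2-\varepsilon}$), which is already stated uniformly over all vertices on a set of graphs of probability tending to $1$. Combined with hypothesis N2, namely $w_i\lesssim \Delta^{1-\gamma}$ for $i>\ell$, one obtains for every low degree node
$$
k_i\;\le\;w_i+w_i^{1/2-\varepsilon}\;\lesssim\;w_i\;\lesssim\;\Delta^{1-\gamma},
$$
and therefore $|y_{q,i}|\lesssim \Delta^{-\gamma}$. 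Summing the finitely many $(p,q)$ contributions and using that $\|u_p\|_0<\infty$ yields the claimed bound $\|r_i(x_i,y_i)\|\lesssim \Delta^{-\gamma}$ uniformly in $x_i$ for almost every graph.

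For the $C^{1+\nu}$ regularity statement, I would simply note that $x_i\mapsto r_i(x_i,y_i)$ is a finite linear combination (with coefficients $y_{q,i}$ that are constant in $x_i$) of the functions $u_p$, which by assumption on the interaction $h$ are $C^{1+\nu}$. Hence the sum is $C^{1+\nu}$ as well. The only place where a genuine probabilistic argument enters is the concentration step; the main obstacle I would expect is purely notational, namely making sure the "almost every graph" statement is uniform in $i$ over all low degree vertices, but this is exactly the form in which the Concentration Proposition was already proved, so no additional union bound is needed.
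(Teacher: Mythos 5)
Your argument is correct, and it takes a genuinely different (and more elementary) route than the paper. The paper never uses the crude bound $|y_{q,i}|\le \|v_q\|_0\,k_i/\Delta$; instead, treating the states $x_j$ as fixed, it computes the mean and variance of $y_{q,i}$ over the graph ensemble, getting $\|\mathbb{E}_{\bm{w}}(y_{q,i})\|\lesssim \Delta^{-\gamma}$ and $\mathrm{Var}_{\bm{w}}(y_{q,i})\lesssim \Delta^{-1-\gamma}$, and then applies Chebyshev to $y_{q,i}$ itself; the final bound $\Delta^{-\gamma}$ comes from the mean term $w_i/\Delta$, which is exactly the quantity your degree bound captures, so the two routes land on the same exponent and the paper's variance work buys nothing extra here (it is essentially a warm-up for the analogous computation in the mean-field homogeneity proposition for the hubs, where it is genuinely needed). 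Your route has two concrete advantages: it reduces everything to the single probabilistic event of the Concentration Proposition, which is already stated uniformly over all vertices; and the bound $|y_{q,i}|\le\|v_q\|_0 k_i/\Delta$ is deterministic in the states, hence uniform in $t$, which sidesteps the delicate point that for $t>0$ the states $x_j(t)$ are themselves functions of the graph, so $A_{ij}$ and $v_q(x_j(t))$ are not independent --- an issue the paper's Chebyshev argument quietly glosses over by fixing $t$. The regularity claim is handled identically in both proofs. One caveat worth noting: both your argument and the paper's ultimately lean on the ``every vertex'' uniformity of the Concentration Proposition, whose own proof via per-vertex Chebyshev strictly needs a union bound (or a Chernoff-type estimate) to justify that clause; since you invoke the proposition as stated, this is not a gap attributable to your proof.
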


\begin{proof} The claim on the regularity of $r_i$ is trivial. The second claim follows from the concentration properties. Since the 
manifold is compact, we obtain for every $x\in M$
$\left\| v_q(x) \right\| \le K $ and $\left\| u_p(x) \right\| \le K 
$
Now we wish to show concentration properties 
for $y_{q,i}(t)$. Hence, for a given fixed $t$ for simplicity we denote $y_{q,i} (t)= y_{q,i}$ and estimate
\begin{equation}\label{Ey}
\mathbb{E}_{\bm{w}} (y_{q,i})  = \frac{w_i}{\Delta} \rho \sum_{i=1}^n w_j v_q( x_j )
\end{equation}
leading to 
$$
\|  \mathbb{E}_{\bm{w}} (y_{q,i})  \|  \lesssim \Delta^{- \gamma}.
$$
To obtain our claim we also estimate the variance
$$
\mbox{Var}_{\bm{w}}(y_{q,i}) = \mathbb{E}_{\bm{w}}(y_{q,i}^2) - \mathbb{E}_{\bm{w}}^2 (y_{q,i}),
$$
hence, we need to estimate 
\begin{eqnarray}
\mathbb{E}_{\bm{w}}(y_{q,i}^2) &=& \mathbb{E}_{\bm{w}} \left(  \frac{1}{\Delta^2}  \sum_{j,k} A_{ij}A_{ik} v_q( x_j ) v_q(  x_k  ) \right) \\ 
&=& \mathbb{E}_{\bm{w}} \left(  \frac{1}{\Delta^2}  \sum_{j} A^2_{ij}v_q( x_j) ^2  + \frac{1}{\Delta^2}  \sum_{j\not=k} A_{ij} A_{ik} v_q( x_j ) v_q(  x_k ) \right) .
\end{eqnarray}
But $A_{ij}^2 = A_{ij}$, hence, 
\begin{eqnarray}
\mathbb{E}_{\bm{w}}(y_{q,i}^2) &=& \left(  \frac{w_i}{\Delta^2}  \rho \sum_{j} w_j v( x_j ) ^2  + \frac{w_i^2}{\Delta^2}  \rho^2\sum_{j\not=k} w_j w_{k} v_q( x_j ) 
v_q( x_k ) \right) .
\end{eqnarray}

Combining both computations for  $\mathbb{E}_{\bm{w}}(y_{q,i}^2)$ and  $\mathbb{E}^2_{\bm{w}}(y_{q,i})$
we obtain
$$
\mbox{Var}_{\bm{w}}(y_{q,i}) = \frac{w_i}{\Delta^2} \rho \sum_{j} w_j v_q( x_j) ^2  -  \frac{w_i^2}{\Delta^2}  \rho^2 \sum_{j} w_j^2 v_q( x_j) ^2, 
$$
and as the functions $v_q$ are bounded
$$
\| \mbox{Var}_{\bm{w}}(y_{q,i}) \|  \le  K^2\left(  \frac{w_i}{\Delta^2}  +  \frac{w_i^2}{\Delta^2}  \rho^2 \sum_{j} w_j^2 \right).
$$

Recall that by property N2 of the strong heterogeneity $w_i \le \Delta^{1-\gamma}$, and note that $\sum_j w_i^2 \le \Delta / \rho $ which leads to the 
following estimate
$$
 \frac{w_i^2}{\Delta^2}  \rho^2 \sum_{j} w_j^2 \le \frac{w_i^2 \rho}{\Delta}  \lesssim \Delta^{-1 - 3 \gamma}. 
$$
This leads to 
\begin{eqnarray}
\| \mbox{Var}_{\bm{w}}(y_{q,i}) \| &\lesssim& \Delta^{-1 - \gamma} +\Delta^{-1 -3\gamma}  ~ \lesssim ~ \Delta^{-1 - \gamma}.
\end{eqnarray}
Hence, for every $(1-\gamma) /2 > \beta >0$ we obtain
$$
\mbox{Pr}_{\bm{w}} \left( \| y_{q,i} - \mathbb{E}_{\bm{w}}(y_{q,i}) \| \gtrsim \Delta^{-(1+\gamma)/2 +\beta}  \right) \lesssim 
\Delta^{-2 \beta}$$ 
implying that for almost every network in the model $\mathcal{G}(\bm{w})$,
%. Hence,  almost every network in $\mathcal{G}(\bm{w})$ satisfies
$$
\| y_{q,i} -  \mathbb{E}_{\bm{w}}(y_{q,i}) \| \lesssim \Delta^{-(1+\gamma)/2 + \beta}.
$$
By the triangle inequality $| \|  y_{q,i} \|  - \| \mathbb{E}_{\bm{w}}(y_{q,i}) \| | \lesssim \Delta^{-(1+\gamma)/2 + \beta}$
 together with 
 (\ref{Ey}) and taking $\beta$ small enough we obtain that 
$$
\| y_{q,i} \| \lesssim \Delta^{-\gamma}\, .
$$
Now recall that the function $u_p$ are uniformly bounded over $M$  yielding
$$
\| r_i(x_i,y_i) \| = \| \sum_{p,q}  u_p(x_i) y_{q,i}  \| \lesssim \Delta^{-\gamma}
$$
for almost every network as $y_{q,i}$ has the derived concentration properties.
\end{proof}

This result reveals that for almost every network $\mathcal{G}(\bm{w})$ the network effect
on low degree nodes is a perturbation in the limit of large $\Delta$.  We use this remark to treat the 
mean field reduction in terms of the ergodic properties of the perturbed maps.  

Next we consider the index $i$ fixed on a low degree node. The following argument will hold for any 
low degree node. We can view the perturbed map $f_t = f + r$ as a random-like perturbation of the 
map $f$ by writing
$$
f_t(x) = f(x) + r(x(t),t)
$$ 
where $r$ is small by Proposition \ref{rDelta}.
Note that the maps $f_t$ are uniformly close in the $C^1$ topology to $f$. 
We view $f_t$ as parametrized families $f_t : M \rightarrow M $ of $C^{1+\nu}$-H\"older continuous maps.  So at each time step in its evolution, we pick a 
map $f_t$ in an open neighborhood of $f$. 
Denote $\bm{t} = (t_1, t_2, \cdots)$ and define
$$
f_{\bm{t}}^k = f_{t_k} \circ f_{t_{k-1}} \circ \cdots \circ f_{t_1},
$$
hence, the equation may be recast as
$$
x^{k+1} = f_{\bm{t}}^k (x_0).
$$
For that map we introduce perturbed versions of the linear operator $\mathcal{L}$ 
$$
(\mathcal{L}_t \varphi)(y)  = \sum_{f_t(x)=y} \frac{\varphi(x)}{|\mbox{det}Df_t (x)|}.
$$
We now claim the following  
 \begin{lemma}[Perturbation]\label{Norm} 
Consider the transfer operator of both $f$
and $f_t$ acting on the space  $C(a,\nu)$. Then 
$$
\| \mathcal{L}\varphi - \mathcal{L}_t \varphi  \| \lesssim \Delta^{- \gamma \nu}
$$
on the norm of uniform convergence.
 \end{lemma}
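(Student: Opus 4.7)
The plan is to compare the two transfer operators pointwise by setting up a canonical bijection between the fibres $f^{-1}(y)$ and $f_t^{-1}(y)$ and then controlling the differences term-by-term using Hölder regularity and the smallness bound from Proposition \ref{rDelta}. Since $f$ is an expanding circle map and $f_t=f+r(\cdot,t)$ with $\|r\|_0\lesssim \Delta^{-\gamma}$, for $\Delta$ large the map $f_t$ is also expanding and has the same topological degree $d=\deg f$. Hence we may enumerate $f^{-1}(y)=\{x_1,\dots,x_d\}$ and pair each $x_i$ with a unique $\tilde x_i \in f_t^{-1}(y)$ lying in the same small neighbourhood. The relation $f(x_i)=f_t(\tilde x_i)=f(\tilde x_i)+r(\tilde x_i,t)$ combined with the expansivity inequality $\sigma|x_i-\tilde x_i|\le|f(x_i)-f(\tilde x_i)|$ immediately yields
\[
|x_i-\tilde x_i|\;\lesssim\;\|r\|_0\;\lesssim\;\Delta^{-\gamma}.
\]

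Second, I would decompose the pointwise difference as a telescoping sum. Writing
\[
(\mathcal L\varphi-\mathcal L_t\varphi)(y)=\sum_{i=1}^{d}\left[\frac{\varphi(x_i)-\varphi(\tilde x_i)}{|Df(x_i)|}+\varphi(\tilde x_i)\left(\frac{1}{|Df(x_i)|}-\frac{1}{|Df_t(\tilde x_i)|}\right)\right],
\]
the task reduces to estimating the two bracketed terms uniformly in $y$. For the first, since $\varphi\in C(a,\nu)$ the function $\log\varphi$ is $(a,\nu)$-Hölder, so $\varphi$ itself is Hölder with exponent $\nu$ on the compact manifold $M$; together with $|x_i-\tilde x_i|\lesssim\Delta^{-\gamma}$ this gives $|\varphi(x_i)-\varphi(\tilde x_i)|\lesssim \|\varphi\|_0\,\Delta^{-\gamma\nu}$, and the denominator is bounded below by $\sigma>1$. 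For the second, I rewrite
\[
\frac{1}{|Df(x_i)|}-\frac{1}{|Df_t(\tilde x_i)|}=\frac{|Df_t(\tilde x_i)|-|Df(x_i)|}{|Df(x_i)|\,|Df_t(\tilde x_i)|}
\]
and split $|Df_t(\tilde x_i)|-|Df(x_i)|$ into $[Df(\tilde x_i)-Df(x_i)]$ and $[Df_t(\tilde x_i)-Df(\tilde x_i)]=D_xr(\tilde x_i,t)$. The first piece is controlled by the $\nu$-Hölder regularity of $Df$ (Assumption \ref{H2}), giving $\lesssim|x_i-\tilde x_i|^\nu\lesssim\Delta^{-\gamma\nu}$.

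The delicate step, and the only genuine obstacle, is bounding $D_x r(\tilde x_i,t)$. Here I do not use $\|r\|_0$ as a black box but exploit the explicit form
\[
r_i(x,y_i)=\sum_{p,q}u_p(x)\,y_{q,i},\qquad y_{q,i}=\frac{1}{\Delta}\sum_j A_{ij}v_q(x_j),
\]
so that $D_x r_i(x,y_i)=\sum_{p,q}u_p'(x)\,y_{q,i}$. The $u_p'$ are uniformly bounded by $C^{1+\nu}$-regularity of the $u_p$'s, and the concentration estimate $\|y_{q,i}\|\lesssim \Delta^{-\gamma}$ derived in the proof of Proposition \ref{rDelta} yields $\|D_xr(\cdot,t)\|_0\lesssim \Delta^{-\gamma}$ for almost every network. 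Since $\nu\in(0,1]$ we have $\Delta^{-\gamma}\le \Delta^{-\gamma\nu}$, so all contributions are dominated by $\Delta^{-\gamma\nu}$. Combining the estimates and using that the number $d$ of preimages is fixed gives
\[
\|\mathcal L\varphi-\mathcal L_t\varphi\|_0\;\lesssim\;\Delta^{-\gamma\nu},
\]
uniformly in $y$, as required. The bound is uniform in $t$ because all ingredients depend on $t$ only through $r(\cdot,t)$, whose smallness holds simultaneously for all $t$ on the full-measure set of networks supplied by Proposition \ref{rDelta}.
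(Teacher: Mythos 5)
Your proof follows essentially the same route as the paper's: pair the preimages of $y$ under $f$ and $f_t$, bound their separation by $\|r\|_0\lesssim\Delta^{-\gamma}$ via the inverse branches, and telescope the difference of the transfer operators into a test-function term (controlled by the H\"older regularity of $\log\varphi$) and a Jacobian term (controlled by the $\nu$-H\"older regularity of the derivative together with the smallness of $Dr$), each of order $\Delta^{-\gamma\nu}$. The only differences are cosmetic: you use $Df(\tilde x_i)$ rather than $Df_t(x_i)$ as the intermediate point in the Jacobian telescoping, and you justify the bound on $D_x r$ explicitly from the product form of $h$ and the concentration of $y_{q,i}$ --- a point the paper invokes through Proposition~\ref{rDelta} without spelling it out.
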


\begin{proof} Note that the main difference here to the stochastic stability analysis performed in \cite{Viana} 
is that the maps $f_{t}$ are not chosen independently, which may lead to the non-existence
of stationary measures. Since 
$f$ is a local diffeomorphism, all the points $y\in M$ have a same number $k\ge 1$ of
preimages, namely the degree of $f$. (In our case $k=2$.)
Moreover, given any preimage $x$ of $y$, 
there exists a neighbourhood $V$ of $y$ and an inverse branch $g : V \rightarrow M$
such that $f \circ \phi = identity$ and $\phi(y) =x$. A local 
inverse branch $g$ must be contracting 
$$
d(\phi(y),\phi(y^{\prime})) \le \sigma^{-1} d(y,y^{\prime}) 
$$
for every $y,y^{\prime}$ in $V$. Moreover, by compactness of $M$, there exists $\rho_0$ such that given $y_1$, $y_2 \in M$ with $d(y_1,y_2) \le \rho_0$,
one may write $f^{-1}(y_j) = \{ x_{j1}, \cdots , x_{jk} \}$  for $j=1,2$ with 
$$
d(x_{1i}, x_{2i}) \le \sigma^{-1} d(y_1,y_2)
$$
for each $i=1,\cdots,k$. 
Obviously
$$
f^{-1}_t(y) = \{  x_{1,t} , \cdots , x_{k,t}  \}.
$$
With these remarks, we can write the transfer operators as 
$$
(\mathcal{L} \varphi)(y) = \sum_{i=1}^k \frac{\varphi(x_i)}{|\mbox{det}Df(x_i)|}  \, \, \mbox{  ~  ~  and   ~ ~  } \, \,
(\mathcal{L}_t \varphi)(y) = \sum_{i=1}^k \frac{\varphi(x_{i,t})}{|\mbox{det}Df_t(x_{i,t})|} .
$$

We proceed by obtaining bounds for the transfer operator $\mathcal{L}_t$. First notice that
as $f_t$ is $C^{1+ \nu }$ and uniformly $\Delta^{-\gamma}$ close to $f$ 
\begin{equation}\label{xit}
\sup\{ d ( x_{i,t}, x_i) : 1 \le i \le k,  y\in M  \}  \lesssim {\Delta}^{-\gamma} .
\end{equation}

To obtain bounds on the Jacobian we first note that
\begin{equation}\label{Df_Dif}
\mbox{det} Df_t(x_{i,t}) - \mbox{det} Df(x_i) =     \mbox{det} Df_t(x_i) - \mbox{det} Df(x_i)  + \mbox{det} Df_t(x_{i,t})
- \mbox{det} Df_t(x_i). 
\end{equation}
We estimate the first difference in the right hand side of (\ref{Df_Dif}) as follows. 
Since $f_t = f + r$ is a uniformly $\Delta^{-\gamma}$ close to $f$. Moreover, 
\begin{eqnarray}
| \mbox{det} Df(x_i) - \mbox{det} Df_t(x_i) |  &=& |\mbox{det} Df(x_i)| \left| 1 - \mbox{det} \left[ Df_t(x_i) Df^{-1}(x_i) \right] \right| \nonumber \\
&=& |\mbox{det} Df(x_i)| \left| 1 - \mbox{det} \left[ D(f+r)(x_i) Df^{-1}(x_i) \right] \right| \nonumber \\
&=& |\mbox{det} Df(x_i)| \left| 1 - \mbox{det} \left[ \mbox{I} + Dr(x_i) Df^{-1}(x_i) \right] \right|,  \nonumber 
\end{eqnarray}
and
$$
\mbox{det} \left[ \mbox{I} + Dr(x_i) Df^{-1}(x_i) \right] = 1 + \mbox{tr} D r(x_i) Df^{-1}(x_i).
$$
Noting that expansivity implies that the derivative
$Df$ is an isomorphism at the every point, and in view of Proposition \ref{rDelta} we obtain 
$$
\left| \mbox{tr} \left[ Dr(x_i) Df^{-1}(x_i) \right] \right| \lesssim \Delta^{-\gamma},
$$
and therefore
\begin{equation}\label{D1}
| \mbox{det} Df(x_i) - \mbox{det} Df_t(x_i) | \lesssim \Delta^{-\gamma}\, .
\end{equation}

To control the second difference in the right hand side of  (\ref{Df_Dif}) we note that $Df_t$ is 
$(a,\nu)-$H\"older continuous implying that the determinant is $\nu-$H\"older continuous
$$
|  \mbox{det} Df_t(x_{it}) - \mbox{det} Df_t(x_i) | < b d( x_{i,t}, x_{i})^{\nu},
$$
yielding in view of  (\ref{xit})
\begin{equation}\label{D2}
|  \mbox{det} Df_t(x_{it}) - \mbox{det} Df_t(x_i) | \lesssim \Delta^{-\gamma \nu}.
\end{equation}

These estimates  (\ref{D1}) and  (\ref{D2}) for the right hand side terms of  (\ref{Df_Dif}) yield the following estimate
\begin{equation}\label{Df}
| \mbox{det} Df_t(x_{it})  -  \mbox{det} Df(x_i)  | \lesssim  \Delta^{-\gamma \nu}. 
\end{equation}
%implies that 
%\begin{equation}\label{Dif1D}
%\left| \frac{1}{| \mbox{det} Df_t(x_{it}) |} -  \frac{1}{| \mbox{det} Df(x_{i}) |} \right| \lesssim \Delta^{-\gamma \nu} 
%\end{equation}

To obtain the result, it remains to control the test functions $\varphi$. 
Note that since $\log \varphi$ is $(a,\nu)$-H\"older continuous we have
$$
| \varphi(x_{i,t})  - \varphi(x_{i})| \le \varphi(x_{i}) e^{a d( x_{i,t}, x_{i})^{\nu}}
$$ 
and since the manifold is compact $\varphi$ will attain its maximum on $M$. Hence,
\begin{equation}\label{DifVa}
| \varphi(x_{i,t})   -  \varphi(x_{i}) | \lesssim \Delta^{- \gamma \nu}
\end{equation}
as $1 - e^{a d( x_{i,t}, x_{i})^{\nu}} \le d( x_{i,t}, x_{i})^{\nu}$, once,  $a d( x_{i,t}, x_{i})^{\nu} \le 1$ as $\Delta$ is large.
Altogether the estimates  (\ref{Df}) and  (\ref{DifVa}) imply that for all $y \in M$
$$
\| \mathcal{L}_t \varphi - \mathcal{L} \varphi \|_0 \lesssim \Delta^{-\gamma  \nu}, 
$$
concluding the result. 
\end{proof}

The above proposition reveals that at each step the transfer operator of the low degree nodes
is uniformly close to the unperturbed operator. The natural question concerns the behavior of 
compositions of the transfer operators, corresponding to the time evolution. 
Hence, we introduce the transfer operator
$$
\mathcal{L}^k_{\bm{t}} = \mathcal{L}_{t_k} \circ \mathcal{L}_{t_{k-1}} \circ \cdots \circ \mathcal{L}_{t_1} .
$$

Our next result characterizes the action of $\mathcal{L}^k_{\bm{t}}$.

\begin{propo}\label{Near}
Let $\varphi \in C(a,\nu)$ then there exists $u = u(\varphi)>0$ such that  almost every network 
$$
\mathcal{L}^k_{\bm{t}} \varphi=  \varphi_0 + \tilde{\mu}_k 
$$
for all $k > u$ where $\varphi_0\in C(a,\nu)$ is the fixed point of the unperturbed operator $\mathcal{L}$ and $\tilde{\mu}_k\in C(a,\nu)$, $k=1,2,\dots$  satisfy
$$
\left\| \tilde{\mu}_k\right\|_0 \lesssim \Delta^{-\gamma \nu}  \mbox{~ for all ~} k > u.
$$
\end{propo}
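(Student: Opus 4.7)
The plan is to combine the projective-metric contraction of the unperturbed operator $\mathcal{L}$ from Proposition~\ref{Lfp} with the $C^0$-perturbation estimate of Lemma~\ref{Norm} through a telescoping decomposition. Normalising $\int\varphi\,dm=1=\int\varphi_0\,dm$ so that $\varphi-\varphi_0$ has zero $m$-mean, I would write
\[
\mathcal{L}^k_{\bm{t}}\varphi - \varphi_0 \;=\; \mathcal{L}^k_{\bm{t}}(\varphi-\varphi_0) \;+\; \bigl(\mathcal{L}^k_{\bm{t}}\varphi_0-\varphi_0\bigr),
\]
and treat the two terms separately.

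For the second term, I would telescope against the identity $\mathcal{L}^{j}\varphi_0=\varphi_0$:
\[
\mathcal{L}^k_{\bm{t}}\varphi_0 - \varphi_0 \;=\; \sum_{j=1}^{k}\bigl(\mathcal{L}_{t_k}\circ\cdots\circ\mathcal{L}_{t_{j+1}}\bigr)\bigl(\mathcal{L}_{t_j}-\mathcal{L}\bigr)\varphi_0 .
\]
By Lemma~\ref{Norm} each factor $\psi_j := (\mathcal{L}_{t_j}-\mathcal{L})\varphi_0$ satisfies $\|\psi_j\|_0 \lesssim \Delta^{-\gamma\nu}$, and since both $\mathcal{L}$ and $\mathcal{L}_t$ preserve the integral against $m$, $\psi_j$ has zero mean. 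To handle the outer compositions, I would upgrade Lemma~\ref{Norm} by showing that for $\Delta$ large each $\mathcal{L}_t$ also preserves the cone $C(a,\nu)$ and contracts $\theta_{a,\nu}$ at a rate $\lambda'<1$ uniform in $t$ (a cone-perturbation argument from the $C^{1+\nu}$-closeness of $f_t$ to $f$ established in Proposition~\ref{rDelta}). Using Birkhoff's inequality relating $\theta_{a,\nu}$ to the $C^0$-norm, this pathwise contraction of the cone then implies that the compositions $\mathcal{L}_{t_k}\circ\cdots\circ\mathcal{L}_{t_{j+1}}$ contract zero-mean $C^0$-functions geometrically at rate $(\lambda')^{k-j}$. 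The telescoping sum is then a geometric series bounded by $O(\Delta^{-\gamma\nu})$ independently of $k$. The first term $\mathcal{L}^k_{\bm{t}}(\varphi-\varphi_0)$ is handled by the same contraction: it is $\lesssim (\lambda')^k\|\varphi-\varphi_0\|_0$, so choosing $u=u(\varphi)$ large enough that $(\lambda')^u\|\varphi-\varphi_0\|_0 \lesssim \Delta^{-\gamma\nu}$ (which only forces $u\gtrsim \log\Delta$) makes this piece also below $\Delta^{-\gamma\nu}$. Combining the two estimates yields $\|\mathcal{L}^k_{\bm{t}}\varphi-\varphi_0\|_0 \lesssim \Delta^{-\gamma\nu}$ for all $k\ge u$, giving the statement with $\tilde\mu_k := \mathcal{L}^k_{\bm{t}}\varphi-\varphi_0$.

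The main obstacle is transferring the projective contraction on the cone to a $C^0$-contraction on zero-mean functions along the non-autonomous sequence $\bm{t}$. Because the perturbations $f_{t_i}$ are deterministic and coupled to the orbit (not i.i.d.), no stochastic-stability result can be invoked off the shelf. The contraction must be obtained pathwise: one writes a zero-mean $C^0$-function as a difference of two normalised elements of $C(a,\nu)$ with cone-heights controlled by the $C^0$-norm, applies the uniform-in-$t$ projective contraction of the upgraded Lemma~\ref{Norm} to each, and converts back to $\|\cdot\|_0$ via Birkhoff's inequality. Ensuring that all constants in this decomposition are uniform across the almost-sure set of network realisations supplied by Proposition~\ref{rDelta}, and that the cone-height bound does not degrade as the zero-mean function shrinks, is the delicate part.
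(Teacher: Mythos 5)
Your proposal is correct in outline and rests on the same two pillars as the paper's proof --- the $C^0$ perturbation bound of Lemma~\ref{Norm} and the projective contraction of $\mathcal{L}$ on the cone $C(a,\nu)$ from Proposition~\ref{Lfp} --- but it organizes them differently. The paper first converts the $C^0$ estimate into a Hilbert-metric estimate $\theta(\mathcal{L}_t\varphi,\mathcal{L}\varphi)\lesssim\Delta^{-\gamma\nu}$ and then invokes the abstract Lemma~\ref{ContrF}: for any family of maps uniformly $\varepsilon$-close in a metric to a contraction with fixed point $z$, every non-autonomous composition eventually enters and remains in a ball of radius $O(\varepsilon/(1-k))$ about $z$. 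This tracks the orbit $\mathcal{L}^k_{\bm t}\varphi$ directly in $(C(a,\nu),\theta)$ and needs only cone invariance and closeness of each $\mathcal{L}_t$ to $\mathcal{L}$; it does \emph{not} require the perturbed operators themselves to be uniform contractions. Your telescoping route needs precisely that extra ingredient (your ``upgraded Lemma~\ref{Norm}''), i.e.\ a uniform-in-$t$ cone contraction for $\mathcal{L}_t$, together with the Birkhoff-type passage from $\theta$ to $\|\cdot\|_0$. On the point you flag as delicate: you do not actually need to decompose an arbitrary zero-mean $C^0$ function into cone elements (which would fail for lack of H\"older regularity); each telescoping summand $(\mathcal{L}_{t_j}-\mathcal{L})\varphi_0$ is already an explicit difference of two normalized cone elements at $\theta$-distance $\lesssim\Delta^{-\gamma\nu}$, so you can propagate the two cone elements separately through the outer composition and compare only at the end. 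With that reading your geometric series and the choice $u\gtrsim\log\Delta$ go through, and both arguments leave the same final step implicit (comparing $\theta$-balls around $\varphi_0$ with $C^0$-balls for normalized densities). What your route buys is a more quantitative, self-contained bound; what the paper's buys is weaker hypotheses on the perturbed operators and a reusable abstract perturbation lemma.
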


Before we prove this Proposition, we need the following auxiliary result concerning the behavior of a family
of operators near a contraction. Given a metric space $(X,d)$, we denote the open ball 
$$
B(z,\delta) := \{x \in {X} : d( x, z) < \delta \}.
$$
Our claim is as follows

\begin{lemma}\label{ContrF}
Let $(X,d)$ be a metric space and transformation $F : {X} \rightarrow {X}$ with Lipschitz constant $k<1$ and a unique fixed point $z$. 
Let  $T$ be a metric space and consider a family of transformations $F_t : {X} \rightarrow {X}$, with $t \in T$, 
and introduce the composition 
$$
F^n_{\bm{t}} = F_{t_n}  \circ F_{t_{n-1}} \circ \cdots F_{t_1}.
$$
Suppose that the transformations 
satisfy
$$
\sup_{x\in X} d (F(x),  F_t(x)) \le \varepsilon.
$$

Then  there exists $B = B\left(\frac{4 \varepsilon}{1-k},z\right) \subset {X}$ satisfying 
\begin{description}
\item{i)} for any $n>0$ 
$$
F^n_{\bm{t}} (B) \subset B; 
$$
\item{ii)} for all $x\in {X}$ there exists $n=n(x)$ such that
$$
F^n_{\bm{t}} (x) \in B,
$$
where $n$ is uniformly bounded on any compact subset containing $B$.
\end{description}
\end{lemma}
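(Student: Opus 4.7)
The plan is to reduce both assertions to a single elementary estimate. Since $F(z) = z$ and $F$ has Lipschitz constant $k$, the triangle inequality gives
\[
d(F_t(x), z) \le d(F_t(x), F(x)) + d(F(x), F(z)) \le \varepsilon + k\, d(x,z)
\]
for every $x\in X$ and $t\in T$. Both claims will then follow by iterating this bound.

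For assertion (i), I would verify directly that $F_t(B)\subset B$ for each single $t\in T$, which by induction on $n$ yields $F^n_{\bm{t}}(B)\subset B$. Given $x\in B$, i.e.\ $d(x,z) < 4\varepsilon/(1-k)$, the displayed estimate produces
\[
d(F_t(x), z) \;\le\; \varepsilon + \frac{4k\varepsilon}{1-k} \;=\; \frac{(1+3k)\,\varepsilon}{1-k} \;<\; \frac{4\varepsilon}{1-k},
\]
the strict inequality being a consequence of $k<1$. Hence $F_t(x)\in B$. Note that the minimal forward-invariant radius would already be $\varepsilon/(1-k)$; the generous factor $4$ is convenient for the application of the lemma in Proposition \ref{Near}.

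For assertion (ii), I would iterate the basic inequality from an arbitrary $x\in X$ to obtain
\[
d(F^n_{\bm{t}}(x), z) \;\le\; k^n d(x,z) + \varepsilon \sum_{j=0}^{n-1} k^j \;\le\; k^n d(x,z) + \frac{\varepsilon}{1-k}.
\]
The right-hand side is bounded by $4\varepsilon/(1-k)$ as soon as $k^n d(x,z) \le 3\varepsilon/(1-k)$. For $d(x,z)\le \varepsilon/(1-k)$ one can take $n=0$; otherwise one chooses any integer $n$ exceeding $\log\!\bigl((1-k)d(x,z)/(3\varepsilon)\bigr)/\log(1/k)$. In either case $n(x)$ depends only on $d(x,z)$, which is a continuous function of $x$ and therefore bounded on every compact subset of $X$; this supplies the uniform bound required.

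The main obstacle is essentially absent: this is a textbook-style perturbation argument around the Banach contraction principle. The only subtle point is that the perturbations $F_t$ need not themselves be contractions and the composition $F^n_{\bm{t}}$ need not have a fixed point, so one cannot invoke Banach directly; however, the uniform $\varepsilon$-closeness of each $F_t$ to $F$ combined with the geometric decay of $k^n$ is precisely what is needed to funnel every orbit into the ball $B$ in a uniformly bounded number of steps, which is all the lemma asserts.
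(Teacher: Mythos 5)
Your proof is correct and follows essentially the same route as the paper: a one-step triangle-inequality bound of the form $d(F_t(x),z)\le \mathrm{const}\cdot\varepsilon + k\,d(x,z)$, iterated to show forward invariance of the ball and geometric funnelling of arbitrary orbits into it. Your version is in fact slightly sharper (you compare $F_t(x)$ directly to $F(z)=z$ and get the constant $\varepsilon$ where the paper carries $3\varepsilon$), but this is a refinement of the same argument rather than a different one.
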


\begin{proof} Consider a fixed $t$ and let $G = F_t$, then note that 
\begin{eqnarray}
d(G(x),G(y)) &=& d(G(x),F(x)) + d(F(y),G(y)) + d(F(x),F(y))    \nonumber \\
&\le&  2 \varepsilon + k d( x , y)  \nonumber
\end{eqnarray}
and,
\begin{eqnarray}
d(G(x), F(y)) &=& d(G(x) ,G(y)) + d( F(y) ,  G(y) )  \nonumber \\
&\le&  3 \varepsilon + k d( x , y  ).  \nonumber
\end{eqnarray}

Now, consider  a ball  $x \in B(z,\delta)$.  We claim that if $ \delta = \frac{3 \varepsilon}{1 - k }$ then 
$F^n_{\bm{t}}(x) \in B(\delta,z)$. Indeed, consider the action of a generic element $G$
\begin{eqnarray}
d( G(x) , z )  &=& d( G(x) ,  F(z) ) \nonumber  \\
&\le& 3 \varepsilon + k d( x , z ) \nonumber \le \delta
\end{eqnarray}
but since $x \in B(\delta,z)$ we have the following bound 
$\delta \ge 3 \varepsilon/(1 - k )$, and by induction 
the claim follows.

To prove the second claim, using the same ideas as before, we observe by induction that 
$$
d(  F^n_{\bm{t}} , z ) \le 3 \varepsilon \left( \sum_{i=0}^{j-1} k^i \right) + k^j  d(  F^{n-j}_{\bm{t}} , z ) .
$$

Hence, 
\begin{eqnarray}
d(  F^n_{\bm{t}} , z ) &\le& 3 \varepsilon \frac{1 - k^n}{1-k} + k^n M \nonumber \\
&=& 3 \varepsilon \frac{1}{1-k} + k^n \tilde M 
\end{eqnarray}
where $\tilde M = M - 3\varepsilon/(1-k)$. This concludes the second part. If $x$ 
is contained in a compact subset $C$ 
of $X$ containing $B$ then the claim on $n = n(x)$ can be made uniform on $x$ 
follows from compactness arguments.  
\end{proof}

Now we are ready to prove the result on perturbations of the transfer operator,  Proposition~\ref{Near}.

\begin{proof}[Proof of Proposition~\ref{Near}] Recall the discussion in Sec. \ref{IsoN}:  the convex cone  $C(a,\nu)$ is endowed with the 
metric 
$$
\theta (\varphi_1, \varphi_2) = \log\frac{\alpha(\varphi_1, \varphi_2)}{\beta(\varphi_1, \varphi_2)}
$$
where
$$
\alpha(\varphi_1 , \varphi_2 ) = \inf \left \{  \frac{\varphi_1(x)}{\varphi_2(x)} , \frac{e^{a d^{\nu}(x,y)} \varphi_2(x) - \varphi_2(y)}
{e^{a d^{\nu}(x,y)} \varphi_1(x) - \varphi_1(y)} \right\} 
$$
and $\beta$ is given by a similar expression with $\sup$ replaced by $\inf$.

Let $\varphi \in C(a,\nu)$ and consider
$\varphi_1 (x) =  (\mathcal{L}_t \varphi)(x)$ and 
$\varphi_2 (x) =  (\mathcal{L} \varphi)(x)$.
For simplicity we write  $\varphi_2 (x) = \varphi_1 (x) + \phi(x)$, where $\phi$ belongs to the cone as well, and 
by construction $\sup | \phi(x) | \lesssim \Delta^{\gamma \nu} $.
Hence, 
$$
\left | \frac{\varphi_1(x)}{\varphi_2(x)} - 1 \right| \lesssim \Delta^{\gamma \nu}  .
$$

Likewise, 
\begin{eqnarray}
\frac{e^{a d^{\nu}(x,y)} \varphi_2(x) - \varphi_2(y)}
{e^{a d^{\nu}(x,y)} \varphi_1(x) - \varphi_1(y)} = 1 + 
\frac{\phi(y)}{\varphi_1(y)}\frac{e^{a d^{\nu}(x,y)} \frac{\phi(x)}{\phi(y)} -1}
{e^{a d^{\nu}(x,y)} \frac{\varphi_1(x)}{\varphi_1(y)}-1}
\end{eqnarray}
but the last fraction in the right hand side is bounded, and therefore, 
\begin{eqnarray}
| \frac{e^{a d^{\nu}(x,y)} \varphi_2(x) - \varphi_2(y)}
{e^{a d^{\nu}(x,y)} \varphi_1(x) - \varphi_1(y)} -1 | \lesssim \Delta^{- \gamma \nu} .
\end{eqnarray}

Therefore, we obtain 
$$
\theta (\varphi_1, \varphi_2) =  \left| \log \left( 1 + \psi(\varphi_1, \varphi_2) \right)  \right| \lesssim \Delta^{-\gamma \nu}
$$
as $\| \psi(\varphi_1, \varphi_2) \|_0 \lesssim \Delta^{-\gamma \nu}$. Proposition \ref{Lfp} guarantees that the transfer operator 
$\mathcal{L}$ is a contraction in the metric space $(C(a,\nu),\theta)$. Moreover, $\mathcal{L}$ and $\mathcal{L}_t$ are uniformly close. Hence, applying  Lemma \ref{ContrF} in the metric space $(C(a,\nu),\theta)$ we conclude Proposition~\ref{Near}.
\end{proof}

{\bf The expectation operator:} As before we consider an absolutely 
continuous measure $\mu$ with density $\varphi$. Moreover, we assume $\log \varphi$ is $(a,\nu)-$H\"older continuous. 
Let $\mu^n : M^n \rightarrow [0,1]$ be a product 
measure, that is,  given $A = A_1 \times \cdots \times A_n \subset M^n$ we have $\mu^n(A) = \mu(A_1) \cdots \mu(A_n)$. 
We define the expectation operator with respect to the measure $\mu$ 
$$
\mathbb{E}_{\mu}(\cdot) = \int_{M^n} \cdot  \mbox{  } ~    d \mu^n \,\, .
$$
The intuitive idea here is that the initial conditions of the low degree nodes are distributed 
according to this product measure. So at the initial time the systems are independent. 
Then with the evolution of the initial conditions, since the systems are interacting, the 
pushforward of this measure under the dynamics no longer has the product 
structure. However, since the interaction is mild  a mean field reduction 
can be obtained.

\begin{propo}[Low degree Nodes] Consider the coupled maps  (\ref{md1}).  
For almost every network in $\mathcal{G}(\bm{w})$, given function $\psi, \sigma \in E$ there 
exists $ u = u(\mu)>0$ and a point  $v \in {\mathbb R}$ such that 
\begin{enumerate}
\item For every $\ell < i \le n$ and $t> u$
$$
\| \mathbb{E}_{\mu}(\psi(x_i(t))) -  v \| \lesssim  \Delta^{-\gamma \nu} ;
$$
\item for any $\ell < j,i \le n$ and $t>u$
$$
\| \mbox{\rm Cov}_{\mu}(\psi(x_i(t)),\sigma(x_j(t)) ) \| \lesssim  \Delta^{-\gamma \nu} .
$$
\end{enumerate}
\label{MeanCoVar}
 \end{propo}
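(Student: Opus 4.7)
The plan is to show that the marginal density of any low-degree coordinate, and the joint density of any pair of such coordinates, are respectively $\Delta^{-\gamma\nu}$-close to $\varphi_0$ and to $\varphi_0\otimes\varphi_0$, where $\varphi_0$ is the unique fixed density of $\mathcal{L}$ supplied by Proposition \ref{Lfp}. This forces the candidate
$$v := \int_M \psi\,\varphi_0\,dm, \qquad v_\sigma := \int_M \sigma\,\varphi_0\,dm.$$
The two ingredients are the $\Delta^{-\gamma}$-smallness of the coupling terms from Proposition \ref{rDelta} and the uniform near-contraction of the perturbed transfer operator from Proposition \ref{Near}.

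\emph{Part 1.} Fix a network in the full-measure set on which Proposition \ref{rDelta} holds. For a low-degree $i>\ell$, write the $i$-th coordinate dynamics as the non-autonomous iteration $x_i(s+1)=f_s(x_i(s))$ with $f_s:=f+\alpha r_i(\cdot,y_i(s))$, so that each $f_s$ is uniformly $C^{1+\nu}$-close to $f$ at scale $\Delta^{-\gamma}$, independently of $s$ and of the initial data. For any admissible sequence $\bm{s}=(s_0,\dots,s_{t-1})$, Proposition \ref{Near} gives, for $t>u(\mu)$,
$$\mathcal{L}^{t}_{\bm{s}}\varphi=\varphi_0+\tilde\mu_{t,\bm{s}},\qquad \|\tilde\mu_{t,\bm{s}}\|_0\lesssim \Delta^{-\gamma\nu},$$
and the proof via Lemma \ref{ContrF} makes the constant uniform in $\bm{s}$. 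Now disintegrate $\mu^n$ along the $i$-th coordinate and, for each $x_{-i}^0$, introduce a surrogate sequence $\bm{s}^\ast(x_{-i}^0)$ obtained by freezing the backward feedback of $x_i$ on its out-neighbours; then the realized $\bm{s}(x^0)$ differs from $\bm{s}^\ast(x_{-i}^0)$ by $O(\Delta^{-\gamma})$, and an iterated application of Lemma \ref{Norm} gives $\|\mathcal{L}^{t}_{\bm{s}(x^0)}\varphi-\mathcal{L}^{t}_{\bm{s}^\ast(x_{-i}^0)}\varphi\|_0\lesssim \Delta^{-\gamma\nu}$. Inserting this and Proposition \ref{Near} into
$$\mathbb{E}_\mu[\psi(x_i(t))]=\int_{M^{n-1}}\!\!\int_M \psi(x)\,\mathcal{L}^{t}_{\bm{s}(x^0)}\varphi(x)\,dm(x)\,d\mu^{n-1}(x_{-i}^0)$$
yields $\mathbb{E}_\mu[\psi(x_i(t))]=v+O(\|\psi\|_0\Delta^{-\gamma\nu})$.

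\emph{Part 2.} Run the same argument on the product cone $C(a,\nu)\otimes C(a,\nu)$ over $M\times M$ for the pair $(x_i,x_j)$. Since both $i,j>\ell$ are low-degree, the joint map is a $\Delta^{-\gamma}$-perturbation of $f\times f$, whose transfer operator $\mathcal{L}\otimes\mathcal{L}$ is a contraction in the product projective metric with unique fixed point $\varphi_0\otimes\varphi_0$. The perturbation bound of Lemma \ref{Norm} and the abstract contraction Lemma \ref{ContrF} lift to this setting without change, producing a two-dimensional analogue of Proposition \ref{Near}. Applied to the product initial density $\varphi\otimes\varphi$, this gives
$$\mathbb{E}_\mu[\psi(x_i(t))\,\sigma(x_j(t))]=\int_{M^2}\psi(x)\sigma(y)\,\varphi_0(x)\varphi_0(y)\,dm^2(x,y)+O(\Delta^{-\gamma\nu})=v\,v_\sigma+O(\Delta^{-\gamma\nu}).$$
Subtracting $\mathbb{E}_\mu[\psi(x_i(t))]\,\mathbb{E}_\mu[\sigma(x_j(t))]=v\,v_\sigma+O(\Delta^{-\gamma\nu})$ from Part 1 yields the covariance bound.

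The main obstacle is that the cocycle driving $x_i$ is formally not autonomous in $x_{-i}^0$ alone: $\bm{s}(x^0)$ carries a backward-feedback dependence on $x_i^0$ through the coupling of $x_i$ to its out-neighbours, so Proposition \ref{Near} cannot be applied as a pure "conditional on $x_{-i}^0$" statement. This is resolved by the surrogate-sequence replacement above, whose error is precisely of the same order $\Delta^{-\gamma\nu}$ delivered by Lemma \ref{Norm}; crucially the uniformity of Proposition \ref{Near} over admissible sequences $\bm{s}$ is what makes the pointwise-then-integrate scheme work. The product-cone extension needed in Part 2 is a routine adaptation of the one-dimensional projective-metric argument, so the one genuine technical point is the same feedback issue, handled in the same way.
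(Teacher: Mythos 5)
Your argument follows essentially the same route as the paper's proof: for part 1 you push the initial density through the non-autonomous transfer operator and invoke Proposition~\ref{Near} to conclude that $\mathcal{L}^t_{\bm{t}}\varphi$ is $\Delta^{-\gamma\nu}$-close to $\varphi_0$ in $C^0$, with $v=\int_M\psi\,\varphi_0\,dm$; for part 2 you lift the contraction-plus-perturbation scheme to the product cone on $M\times M$ with fixed point $\varphi_0\otimes\varphi_0$ and subtract, exactly as the paper does via the pair map $p^k$ and the projector $\pi^{ij}$. The one place you genuinely diverge is the surrogate-sequence step. The paper simply writes $\mathbb{E}_{\mu}(\psi(x_i(t)))=\int_M\psi\,(\mathcal{L}^t_{\bm{t}}\varphi)\,dm$, tacitly treating the driving sequence as fixed, i.e.\ independent of the integration variable; you correctly flag that $\bm{s}$ depends on $x_i^0$ through the feedback of node $i$ on its out-neighbours, and you propose to disintegrate over $x_{-i}^0$ after replacing $\bm{s}(x^0)$ by a frozen sequence $\bm{s}^\ast(x_{-i}^0)$. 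Be aware, however, that your key quantitative claim there --- that the realized and frozen sequences differ by $O(\Delta^{-\gamma})$ uniformly in $t$ --- is asserted rather than proved, and it is delicate: the $O(1/\Delta)$ initial discrepancy in a neighbour's state is amplified by the expanding dynamics, so after of order $\log\Delta$ steps the two trajectories, and hence the two perturbation sequences, need not be close in any obvious sense; what one can actually use is only that every realized $f_s$ stays within $\Delta^{-\gamma}$ of $f$, so that Lemma~\ref{ContrF} applies to \emph{any} admissible sequence. So your added step identifies a real subtlety that the paper passes over in silence, but your patch does not yet close it; as a reproduction of the paper's argument the proposal is on target, and the remaining issue is one the published proof shares.
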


\begin{proof} 
Let $\pi^i : M^n \rightarrow M$ be a projector to the $i$th component, denoting 
$X = (x_1, \cdots, x_i, \cdots, x_n) \in M^n$ then $\pi^i X = x_i$.  
Now the following argument holds for any $\ell < i \le n$, hence, for sake of simplicity 
we drop the $i$ dependence. 
Given $X^0 \in M^n$, take
$$
x^{k+1} = (f_{\bm{t}}^k \circ \pi ) (X^0).
$$

Note that 
$$
\mathbb{E}_{\mu}(\psi(x^k)) = \int_{M^n} \psi(x^k) d \mu^n =  \int_{M^n} \psi \circ f_{\bm{t}}^k \circ \pi d \mu^n 
= \int_{M} \psi \circ f_{\bm{t}}^k d \mu
$$
and notice that $\mu = \pi_* \mu^n$. Since 
$\mu =  \varphi dm$ we may write 
$$
\mathbb{E}_{\mu}(\psi( x^k) ) = \int_{M} \psi \circ f_{\bm{t}}^k d \mu = \int_{M} \psi   ( \mathcal{L}_{\bm{t}}^k \varphi) d m .
$$
By Proposition \ref{Near} we obtain
$$
\mathcal{L}_{\bm{t}}^k \varphi = \varphi_0 + \tilde{\mu}(t)
$$
where $\varphi_0,\tilde{\mu}(t)\in C(a,\nu)$. 
Defining $v = \int_M \psi   \varphi_0 dm \,$  the claim in the first part follows (for every low degree node).

To prove the second part we proceed in the same manner and obtain the desired estimates
for the covariance. Note that 
\begin{eqnarray}
\mbox{Cov}_{\mu}(\psi(x_i^k) ,\sigma( x_j^k) ) &=& \mathbb{E}_{\mu} \left[  \langle \psi( x_i^k)  - \mathbb{E}_{\mu}( \psi( x_i^k) ) , \sigma( x_j^k)  -  \mathbb{E}_{\mu}(\sigma( x_j^k) ) \rangle \right] \nonumber \\
&=& \mathbb{E}_{\mu} \left[ \langle \psi( x_i^k)  , \sigma( x_j^k ) \rangle \right] - \langle \mathbb{E}_{\mu}( \psi(x_i^k)) ,  \mathbb{E}_{\mu}( \sigma( x_j^k) ) \rangle.  \nonumber 
\end{eqnarray}

Now we wish to estimate the $\mathbb{E}_{\mu} \langle \psi( x_i^k)  , \sigma( x_j^k)  \rangle$. To this end 
we fix $i$ and $j$ with $\ell < i \not=j \le n$, and introduce $p_{\bm{t}}^k : M \times M \rightarrow M \times M $
defined as 
$$
p^k (x_i^0, x_j^0) = (f_{\bm{t}}^k(x_i),f_{\tilde{\bm{t}}}^k(x_j)),
$$ 
for the corresponding sequences $\bm{t}$ and $\tilde{\bm{t}}$, 
along with $(\psi ,\sigma ) \circ p^k = (\psi( x_i^k) ,\sigma( x_j^k ))$. 
Consider the projector $\pi^{ij} : M^q \rightarrow M^2$ defined as 
$\pi^{ij} (x_1 , \cdots, x_i, \cdots, x_j, \cdots, x_n) = (x_i,x_j)$. With this notation 
we may write
\begin{eqnarray}
\mathbb{E} \langle \psi( x_i^k) , \sigma( x_j^k ) \rangle &=& \int_{M^n} \langle \psi( x_i^k)  , \sigma( x_j^k)  \rangle d\mu^n \\
%&=& \int_{M^q}  \langle \mbox{I}_x , \mbox{I}_y \rangle \circ h_{\bm{t}^k} \circ \pi^{ij} d \mu \\
&=&\int_{M\times M} \langle \psi(x) , \sigma(y) \rangle \mathcal{L}_{p}^k d\mu^2
\end{eqnarray}
where $\pi^{ij}_* \mu = \mu^2$. Now since $\mu^n$ has a product structure and 
$\pi^{ij}$ is a natural projector $\mu^2$ has density $\varphi(x) \varphi(y)$.  Note that $h$ is in a 
neighborhood of a product map, and all the estimates hold uniformly, we obtain 
$$
(\mathcal{L}_{p}^k \varphi \times \varphi)(x,y) = \varphi_0 (x) \varphi_0 (y) + \bar{\mu}(x,y)  
$$
where again $\bar{\mu}$  satisfies
$$
\| \bar{\mu} \|_0 \lesssim \Delta^{-\gamma  \nu}, 
$$
and the result follows. 
\end{proof}

\subsection{Homogeneity of the Mean Field}

Before, proving the homogeneity of the mean field we need to control 
certain concentration properties of the graphs. The first is the concentration 
of the degrees $k_i = \sum_j A_{ij}$, which is given by Proposition \ref{ConIn}. 
The second is related to
$$
Y_i = \frac{1}{\Delta^2} \sum_{\ell < j,k \le n} A_{ij} A_{ik} .
$$
Next we show that this quantity is heavily concentrated at $\kappa_i^2$. For this we have the following proposition

\begin{propo} Let $\mathcal{G}(\bm{w})$ satisfy the strong heterogeneity hypothesis.  Then 
for $1\le i \le \ell$, for every $0 < \delta < (1 - \theta)/2$ we have 
$$
\left| Y_i - \kappa_i^2 \right| \lesssim \Delta^{(- 1 + \theta)/2 + \delta}
$$
for almost every network in $\mathcal{G}(\bm{w})$.
\end{propo}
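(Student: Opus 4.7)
The plan is to reduce the statement to the concentration of the partial degree $S_i := \sum_{\ell < j \le n} A_{ij}$ and then square. Observe that
$$
Y_i \;=\; \Big(\tfrac{1}{\Delta}\sum_{\ell<j\le n} A_{ij}\Big)^2 \;=\; (S_i/\Delta)^2,
$$
so it suffices to show $|S_i/\Delta - \kappa_i| \lesssim \Delta^{(\theta-1)/2+\delta}$ simultaneously for all $1\le i\le \ell$, on a family of networks of probability tending to $1$.

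First I would compute the mean. Since $\rho\sum_{j=1}^n w_j = 1$,
$$
\mathbb{E}_{\bm w}(S_i) \;=\; w_i\,\rho\sum_{\ell<j\le n} w_j \;=\; w_i\Big(1 - \rho\sum_{j\le \ell} w_j\Big).
$$
The correction satisfies $\rho\sum_{j\le\ell} w_j \le \rho \cdot \ell\Delta \lesssim \Delta^{\theta-1}$, using N3 ($\ell \lesssim \Delta^{\theta}$) together with (\ref{DeltaRho}) in the form $\rho \le \Delta^{-2}$. Hence $\mathbb{E}_{\bm w}(S_i)/\Delta = \kappa_i + O(\Delta^{\theta-1})$. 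For the variance, since $S_i$ is a sum of independent Bernoullis and $w_i \le \Delta$,
$$
\mbox{Var}_{\bm w}(S_i) \;\le\; \mathbb{E}_{\bm w}(S_i) \;\le\; w_i \;\le\; \Delta.
$$

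Next I would apply Chebyshev and a union bound over the $\ell$ hubs. For any $\lambda>0$,
$$
\mbox{Pr}_{\bm w}\Big(\exists\,1\le i\le\ell:\ \big|\tfrac{S_i}{\Delta} - \tfrac{\mathbb{E}_{\bm w}(S_i)}{\Delta}\big| \ge \lambda\Big) \;\lesssim\; \frac{\ell}{\Delta\,\lambda^2} \;\lesssim\; \frac{\Delta^{\theta-1}}{\lambda^2}.
$$
Choosing $\lambda = \Delta^{(\theta-1)/2 + \delta}$ with $\delta>0$ makes this bound $\lesssim \Delta^{-2\delta}\to 0$. The hypothesis $\delta < (1-\theta)/2$ guarantees that $(\theta-1)/2+\delta < 0$, so the bound is actually a vanishing error. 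On the resulting almost-every-network event we combine with the mean estimate (noting that $\Delta^{\theta-1}$ is dominated by $\Delta^{(\theta-1)/2+\delta}$ since $\delta>0$ and $\theta<1$) to obtain
$$
\big|S_i/\Delta - \kappa_i\big| \;\lesssim\; \Delta^{(\theta-1)/2+\delta} \quad \text{for every } 1\le i \le \ell.
$$

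The proposition follows by squaring: because $S_i/\Delta + \kappa_i$ is bounded by a universal constant (at most $2+o(1)$, using $\kappa_i\le 1$ and the previous display),
$$
|Y_i - \kappa_i^2| \;=\; |S_i/\Delta - \kappa_i|\cdot|S_i/\Delta + \kappa_i| \;\lesssim\; \Delta^{(\theta-1)/2+\delta}.
$$
There is no genuine obstacle here, but the one feature worth emphasising is that the union bound over the $\ell\lesssim\Delta^\theta$ hubs degrades the single-hub Chebyshev exponent from $-1/2$ to $(\theta-1)/2$, which is exactly why N3 with $\theta<1$ is required for the bound to converge.
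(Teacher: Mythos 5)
Your proof is correct, and it takes a genuinely cleaner route than the paper's. The paper works with $Y_i$ directly: it computes $\mathbb{E}_{\bm w}(Y_i)$ by splitting the double sum into the diagonal $j=k$ (using $A_{ij}^2=A_{ij}$) and off-diagonal parts, then estimates $\mathbb{E}_{\bm w}(Y_i^2)$ --- a fourth moment in the edge variables, which it only sketches --- to get $\mathrm{Var}_{\bm w}(Y_i)\lesssim \Delta^{-1+\theta}$, and applies Chebyshev to $Y_i$ itself; the exponent $(-1+\theta)/2$ there comes from that variance bound, whose $\theta$ originates in the bias $\rho\sum_{j\le\ell}w_j\lesssim\Delta^{\theta-1}$ from excluding the hubs, together with the $\Delta^{-1}$ diagonal contribution. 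You instead observe the factorization $Y_i=(S_i/\Delta)^2$ with $S_i=\sum_{\ell<j\le n}A_{ij}$ (valid precisely because the double sum includes the diagonal), which reduces everything to second-moment concentration of a sum of independent Bernoullis plus the same bias estimate, and then you square. This buys two things: you avoid the fourth-moment computation entirely, and you naturally insert a union bound over the $\ell\lesssim\Delta^{\theta}$ hubs, so your conclusion holds for all hubs simultaneously --- a step the paper leaves implicit, since it runs Chebyshev for a single fixed $i$ and then asserts the result for almost every graph. It is a pleasant coincidence of the bookkeeping that the $\Delta^{\theta}$ cost of your union bound degrades the single-hub rate $\Delta^{-1/2+\delta}$ to exactly the stated $\Delta^{(\theta-1)/2+\delta}$, the same exponent the paper extracts from $\mathrm{Var}_{\bm w}(Y_i)$. (Your version also sidesteps a sign slip in the paper's displayed Chebyshev threshold, which reads $\Delta^{-(1+\theta)/2+\delta}$ where $\Delta^{(-1+\theta)/2+\delta}$ is what the variance bound actually supports and what the statement asserts.)
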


\begin{proof} We obtain this claim by a Chebyshev inequality. For $1\le i \le \ell$, for $A_{ij}$ and $A_{ik}$ are 
independent for $j \not =$ and note $A_{ij}^2 = A_{ij}$. Therefore, we need to estimate
the expectation and variance of $Y_i$:
\begin{eqnarray}
\mathbb{E}_{\bm{w}}(Y_i) &=& \frac{w_i}{\Delta^2} \rho \sum_{\ell < j \le n } w_j  +   \frac{w_i^2}{\Delta^2} \rho^2 \sum_{\ell < j\not=k \le n } w_j  w_k. \nonumber
%&=& \frac{w_i^2}{\Delta^2} \left( \frac{1}{\rho} \sum_{\ell < j \le n} w_j \right) \left( \frac{1}{\rho} \sum_{\ell < j \le n} w_j\right)
\end{eqnarray}
Note that 
$$
\rho^2 \sum_{\ell < j\not=k \le n } w_j  w_k = \left( \rho \sum_{\ell < j \le n } w_j \right)  \left( \rho \sum_{\ell < k \le n } w_k \right) - \rho^2 \sum_{\ell < j \le n} w_j^2,
$$
along with $\rho^2 \sum_{\ell < j \le n} w_j^2 \le \rho \Delta^{1-\gamma} \le \Delta^{-1-\gamma}$, where in first inequality we used property N2 of the strong heterogeneity and in the last inequality we used the graphical condition  (\ref{DeltaRho}). Moreover, $w_i = \kappa_i \Delta$, and denote $\hat{\kappa} =  (\sum_{j=1}^\ell \kappa_j) / \ell$, clearly $\hat{\kappa} \in (0,1]$. Thus,
$$
{\rho} \sum_{\ell < j \le n} w_j = 1 - {\rho} \ell \hat{\kappa} \Delta .
$$
Using this together with $\Delta^2 \rho \le 1$ and assumption N3 of the strong heterogeneity implies 
$
\left| \rho \sum_{\ell < j \le n} w_i -1 \right| ~ \lesssim ~ \Delta^{-1 + \theta}
$
leading to 
$$
| \rho^2 \sum_{\ell < j\not=k \le n } w_j  w_k - 1 | ~ \lesssim ~  \Delta^{-1 + \theta}, 
$$
and we obtain
\begin{equation}\label{Ye}
\left| \mathbb{E}_{\bm{w}}(Y_i)  - \kappa_i^2 \right| ~ \lesssim ~ \Delta^{-1 + \theta}. 
\end{equation}

Now we wish to estimate, 
\begin{eqnarray}
\mathbb{E}_{\bm{w}}(Y_i^2) &=&  \frac{w_i^4}{\Delta^4} \sum_{\ell < j,k,p,q \le n } \frac{w_j w_k w_p w_q}{ \rho^4} . \nonumber 
\end{eqnarray}
Now by the same arguments as before we obtain
\begin{eqnarray}
| \mathbb{E}_{\bm{w}}(Y_i^2) - \kappa_i^4  | \lesssim \Delta^{-1 + \theta} .
\end{eqnarray}
This bound together with  (\ref{Ye}) yields a bound for the variance 
$$
| \mbox{Var}_{\bm{w}}(Y_i) | \lesssim \Delta^{-1 + \theta} .
$$

Applying the Chebyshev inequality we obtain that 
$$
\mathrm{Pr}_{\bm{w}} \left( | Y_i - \mathbb{E}_{\bm{w}}(Y_i) | ~ \gtrsim ~ \Delta^{-(1+\theta)/2 + \delta}  \right) ~ \lesssim ~ \Delta^{-2 \delta}.
$$
This implies that for almost every graph 
$$
| Y_i - \mathbb{E}_{\bm{w}}(Y_i) | ~ \lesssim ~ \Delta^{-(1+\theta)/2 + \delta},
$$
now using the triangle inequality and the bound  \ref{Ye} we obtain 
$$
| Y_i - \kappa_i^2 | ~ \lesssim ~ \Delta^{-(1+\theta)/2 + \delta},
$$
and we obtain the result. 
\end{proof}

After this auxiliary concentration result we are now ready to state

\begin{propo}[Homogeneity of the mean Field] Let the initial conditions of the low degree nodes be chosen 
independently and according to a measure $\mu^n$ as before. Let $\psi \in E$,  
then there exists ${v}\in {\mathbb R}$ such that given $ \beta>0$ small enough for almost every network in $\mathcal{G}(\bm{w})$
and $\mu^n-$almost every initial condition 
for all $1 \le i \le \ell$ 
$$
\left | \frac{1}{\Delta} \sum_{j=1}^{n} A_{ij} \psi({x}_j(t)) - \kappa_i {v} \right | \lesssim \kappa_i^{1/2} \Delta^{- \eta + \beta}\, \, 
$$
where $\eta>0$ is determined by the network structure and the dynamics:
indeed, 
\begin{itemize}
\item[i)~ ~  ]  $\theta < 1 - \gamma \nu / 2$  \, implies
$$
\eta = \gamma \nu /2;
$$
\item[ii-a)]  $1/2 > \theta > 1 - \gamma \nu / 2$ \, implies 
$$
\eta = 1/2;
$$
\item[ii-b)]  ~$\theta > 1/2 > 1 - \gamma \nu / 2$ \, implies
$$
\eta = 1 - \theta + \frac{1}{2}\frac{\ln \kappa_i}{\ln \Delta}.
$$
\end{itemize}
\label{emf}
\end{propo}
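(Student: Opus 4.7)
The strategy is to fix a realization of the random network $G$ from the high-probability event on which both the Concentration Proposition ($|k_i-w_i|<w_i^{1/2-\varepsilon}$) and the preceding concentration $|Y_i-\kappa_i^2|\lesssim \Delta^{(-1+\theta)/2+\delta}$ simultaneously hold, and then apply a Chebyshev inequality in $\mu^n$ to
\[
S_i:=\frac{1}{\Delta}\sum_{j=1}^n A_{ij}\psi(x_j(t)),
\]
viewed, for fixed $t>u$, as a function of the $\mu^n$-distributed initial condition. The two ingredients I need are a bias estimate for $\mathbb{E}_\mu[S_i]-\kappa_i v$ and a variance estimate for $\mathrm{Var}_\mu[S_i]$, both computed with the network held fixed.

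For the bias I would decompose $S_i=S_i^H+S_i^L$ into the hub part $j\le\ell$ and the low-degree part $j>\ell$. Since $M$ is compact, $\psi$ is bounded, so $|S_i^H|\le \|\psi\|_0\ell/\Delta\lesssim \Delta^{\theta-1}$ by N3. For $S_i^L$, Proposition~\ref{MeanCoVar}(1) gives $\mathbb{E}_\mu[\psi(x_j(t))]=v+O(\Delta^{-\gamma\nu})$ for each $j>\ell$ and $t>u$, while the Concentration Proposition, together with N3, gives $\Delta^{-1}\sum_{j>\ell}A_{ij}=\kappa_i+O(\kappa_i^{1/2-\varepsilon}\Delta^{-1/2-\varepsilon})+O(\Delta^{\theta-1})$ for a.e.\ network. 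Combining these yields
\[
|\mathbb{E}_\mu[S_i^L]-\kappa_i v|\lesssim \kappa_i \Delta^{-\gamma\nu}+\Delta^{\theta-1}.
\]

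The main work is the variance. Writing
\[
\mathrm{Var}_\mu[S_i^L]=\frac{1}{\Delta^2}\sum_{j,k>\ell}A_{ij}A_{ik}\,\mathrm{Cov}_\mu\bigl(\psi(x_j(t)),\psi(x_k(t))\bigr),
\]
I separate $j=k$ (where $\mathrm{Var}_\mu(\psi(x_j))\le \|\psi\|_0^2$ gives a contribution $\lesssim k_i/\Delta^2\lesssim \kappa_i/\Delta$) from $j\neq k$, where Proposition~\ref{MeanCoVar}(2) bounds the covariance by $O(\Delta^{-\gamma\nu})$ and the off-diagonal double sum equals $\Delta^2 Y_i$, which by the $Y_i$-concentration proposition satisfies $Y_i\lesssim \kappa_i^2$ for a.e.\ network. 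This gives
\[
\mathrm{Var}_\mu[S_i^L]\lesssim \kappa_i^2\Delta^{-\gamma\nu}+\frac{\kappa_i}{\Delta}.
\]
Chebyshev at threshold $\Delta^\beta\sqrt{\mathrm{Var}_\mu[S_i^L]}$ fails on a $\mu^n$-set of probability $\lesssim\Delta^{-2\beta}\to 0$; factoring out $\kappa_i^{1/2}$ and using $\kappa_i\le 1$ I obtain, on the complement,
\[
|S_i^L-\mathbb{E}_\mu[S_i^L]|\lesssim \kappa_i^{1/2}\Delta^\beta\bigl(\Delta^{-\gamma\nu/2}+\Delta^{-1/2}\bigr).
\]

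Summing the three error sources $\Delta^{\theta-1}$ (hub bias), $\kappa_i^{1/2}\Delta^{-\gamma\nu/2}$ (covariance variance) and $\kappa_i^{1/2}\Delta^{-1/2}$ (diagonal variance), and expressing the total as $\kappa_i^{1/2}\Delta^{-\eta+\beta}$, forces
\[
\eta=\min\Bigl(\frac{\gamma\nu}{2},\ \frac{1}{2},\ 1-\theta+\frac{1}{2}\frac{\ln\kappa_i}{\ln\Delta}\Bigr),
\]
and the three subcases (i), (ii-a), (ii-b) of the statement correspond precisely to which minimand is active. The main obstacle is the bookkeeping in this case analysis: the $\kappa_i^{1/2}$ prefactor must survive in each regime. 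One uses $\kappa_i\le 1$ to absorb $\kappa_i\Delta^{-\gamma\nu}$ and $\kappa_i\Delta^{-\gamma\nu/2}$ into $\kappa_i^{1/2}\Delta^{-\gamma\nu/2}$, and $\kappa_i\ge \kappa_{i,\infty}>0$ to absorb the $\kappa_i$-independent hub bias $\Delta^{\theta-1}$ into the $\kappa_i^{1/2}$ prefactor, which is exactly what produces the $\ln\kappa_i/\ln\Delta$ correction in case (ii-b).
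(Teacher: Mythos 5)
Your proposal is correct and follows essentially the same route as the paper: a Chebyshev bound in $\mu^n$ for a fixed network drawn from the good event, with the bias controlled by Proposition~\ref{MeanCoVar}(1) plus the degree concentration, the variance controlled by Proposition~\ref{MeanCoVar}(2) plus the concentration of $Y_i$ at $\kappa_i^2$, and the same three-way case analysis (including the same rewriting of the $\kappa_i$-independent hub term that produces the $\ln\kappa_i/\ln\Delta$ correction). The only differences are bookkeeping-level and, if anything, slightly more careful than the paper's: you bound the hub contribution deterministically in the bias rather than inside the variance, and you treat the diagonal $j=k$ terms explicitly instead of letting them ride on the covariance bound.
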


\begin{proof} We wish to use a Chebyshev bound. Hence, we start estimating the mean and the 
variance. For a fixed $i$ in the set of hubs, we need to estimate the expectation and the variance 
of the coupling term $ \Delta^{-1} \sum_{j} A_{ij} \psi( x_j )$.  The first follows easily 
$$
\mathbb{E}_{\mu}\left( \frac{1}{\Delta} \sum_{j} A_{ij} \psi(x_j) \right) = \frac{1}{\Delta} \sum_{j} A_{ij} \mathbb{E}_{\mu}(\psi(x_j)), 
$$
but from Proposition \ref{MeanCoVar}, we have $ | \mathbb{E}_{\mu}\psi(x_j) - v |  \lesssim \Delta^{-\gamma \nu}$. 
Also,  in view of the concentration inequality Proposition \ref{ConIn} for {\it almost every} network
$$
|{k_i} / {\Delta} - \kappa_i | \lesssim \kappa_i^{1/2}\Delta^{- 1/2+ \varepsilon} .
$$ 
Hence, 
\begin{equation}\label{EmuV}
\left | \mathbb{E}_{\mu}\left( \frac{1}{\Delta} \sum_{j} A_{ij} \psi(x_j) \right) - \kappa_i v  \right | \lesssim \max\{ k_i^{1/2} \Delta^{-1/2+\varepsilon}, \kappa_i \Delta^{-\gamma \nu} \}, 
\end{equation}
for almost every network. 
To estimate the variance we note that 
\begin{eqnarray}
\mbox{Var}_{\mu}\left( \frac{1}{\Delta} \sum_{j} A_{ij} \psi(x_j) \right) 
%&=& \mathbb{E}_{\mu} \left[ \left( \frac{1}{\Delta} \sum_{j} A_{ij} x_j \right) ^2 \right] - \mathbb{E}^2_{\mu}\left( \frac{1}{\Delta} \sum_{j} A_{ij} x_j\right)  \\
&=& \frac{1}{\Delta^2} \sum_{j,k} A_{ij} A_{ik} [ \mathbb{E}_{\mu} (\psi(x_i) \psi( x_j ) ) - \mathbb{E}_{\mu} (\psi(x_i) )  \mathbb{E}_{\mu} (\psi( x_j)  )  ] \nonumber .
\end{eqnarray}

We split the sum for indexes running over hub nodes $1\le j,k \le \ell$ and low degree nodes $\ell < j,k \le n$.
For the hub indexes we have
\begin{equation}\label{Hb}
\left | \frac{1}{\Delta^2} \sum_{1\le j,k \le \ell} A_{ij} A_{ik} [ \mathbb{E}_{\mu} (\psi( x_i) \psi( x_j) ) - \mathbb{E}_{\mu} (\psi(x_i) )  \mathbb{E}_{\mu} (\psi(x_j ) )  ] \right | \lesssim \Delta^{- 2(1-\theta)} .
\end{equation}

Now for the low degree indexes $\ell < j,k \le n$, we have in view of Proposition \ref{MeanCoVar}
$$
\left | \mathbb{E}_{\mu} (\psi(x_i) \psi( x_j) ) - \mathbb{E}_{\mu} (\psi( x_i))  \mathbb{E}_{\mu} (\psi(x_j) )  \right | \lesssim \Delta^{-\gamma \nu}
$$
and hence, 
\begin{equation}\label{DY}
\left | \frac{1}{\Delta^2} \sum_{1\le j,k \le \ell} A_{ij} A_{ik} [ \mathbb{E}_{\nu} (\psi(x_i) \psi(x_j) ) - \mathbb{E}_{\nu} (\psi(x_i))  \mathbb{E}_{\nu} (\psi(x_j) )  ] \right  | \lesssim \Delta^{-\gamma \nu} Y_i.
\end{equation}
Now, we claim good concentration properties for $Y_i$ so that we can change its value by its expected
value $\kappa_i^2$. Hence, combining (\ref{Hb}) and (\ref{DY}) for {\it almost every} graph we obtain
$$
\left | \mbox{Var}_{\mu}\left( \frac{1}{\Delta} \sum_{j} A_{ij} \psi(x_j) \right)  \right | \lesssim \max \{ \kappa_i^2 \Delta^{  - \gamma \nu } ,  \Delta^{- 2(1-\theta)} \},
$$
therefore, the variance depends on a competition between the network structure parameters. We obtain the following cases:

\medskip
\noindent
{\it Case i) $0<\theta < 1 -\gamma \nu /2$}: Notice that
$$
\left | \mbox{Var}_{\mu}\left( \frac{1}{\Delta} \sum_{j} A_{ij} \psi(x_j) \right)  \right | \lesssim \kappa_i^2 \Delta^{  - \gamma \nu }.
$$

Applying Chebyshev inequality 
$$
\mbox{Pr}_{\mu} \left( \left | \frac{1}{\Delta} \sum_{j=1}^{n} A_{ij} \psi({x}_j(t)) - \mathbb{E}_{\mu}\left(  
 \frac{1}{\Delta} \sum_{j=1}^{n} A_{ij} \psi({x}_j(t)) \right) \right | \gtrsim \kappa_i \Delta^{-\gamma \nu/2 + \beta} \right) \le 
 \Delta^{-2 \beta}.
$$
Therefore, we obtain that for almost every network in $\mathcal{G}(\bm{w})$ and $\mu-$almost every 
initial condition we have
$$
 \left | \frac{1}{\Delta} \sum_{j=1}^{n} A_{ij} \psi({x}_j(t)) - \mathbb{E}_{\mu}\left(  
 \frac{1}{\Delta} \sum_{j=1}^{n} A_{ij} \psi( {x}_j(t)) \right) \right | \lesssim \kappa_i \Delta^{-\gamma \nu/2 + \beta}. 
$$
 But in view of the triangle inequality we obtain 
\begin{equation}\label{Ev}
\left | \left | \mathbb{E}_{\mu}\left(  
 \frac{1}{\Delta} \sum_{j=1}^{n} A_{ij} \psi( {x}_j(t))  \right) - \kappa_i v \right | - \left | 
 \frac{1}{\Delta} \sum_{j=1}^{n} A_{ij} \psi( {x}_j(t) ) - \kappa_i v \right |  \right | \lesssim \kappa_i \Delta^{-\gamma \nu/2 + \beta}
 \end{equation}
however, by (\ref{EmuV}) we obtain 
$$
\left |  \frac{1}{\Delta} \sum_{j=1}^{n} A_{ij}  \psi ( {x}_j(t) )  - \kappa_i v \right | \lesssim \max\{\kappa_i^{1/2} \Delta^{-1/2+\varepsilon}, \kappa_i \Delta^{-\gamma \nu/2 + \beta} \}. 
 $$
Notice that as $\kappa_i \in (0,1]$ we have $ \kappa_i < \kappa_i^{1/2}$. 
Now clearly, $\gamma \nu /2 < 1/2$  as $\nu \in (0,1]$ and $\gamma<1$. Moreover, the competition with the term $\log^{1/2} n$ can be absorbed  in $\beta$.
Combining the two upper bounds estimates we obtain 
$$
 \max\{\kappa_i^{1/2} \Delta^{-1/2+\varepsilon}, \kappa_i \Delta^{-\gamma \nu/2 + \beta} \} \lesssim \kappa_i^{1/2} \Delta^{-\gamma \nu/2 + \beta}
$$
and our first claim follows.

\medskip
\noindent
{\it Case ii) $\theta > 1 - \gamma \nu /2$}. We obtain
$$
\left | \mbox{Var}_{\mu}\left( \frac{1}{\Delta} \sum_{j} A_{ij} \psi(x_j) \right)  \right | \lesssim  \Delta^{- 2(1-\theta)}.
$$
Applying the Chebyshev inequality we obtain an inequality similar to   (\ref{Ev}) with the right hand side replaced by $\Delta^{-(1-\theta) + \beta}$. Hence, we obtain
$$
\left |  \frac{1}{\Delta} \sum_{j=1}^{n} A_{ij}  \psi ( {x}_j(t) )  - \kappa_i v \right | \lesssim \max\{\kappa_i^{1/2} \Delta^{-1/2+\varepsilon}, \Delta^{-(1 - \theta) + \beta} \} \, \, 
 $$
as the condition $\theta > 1 - \gamma \nu / 2$ implies $(1- \theta) <  \gamma \nu$. Here we distinguish 
two cases:
 
 \medskip
\noindent
{\it Case ii-a) $1/2 > \theta > 1 - \gamma \nu /2$}, which implies 
$$
  \max\{\kappa_i^{1/2} \Delta^{-1/2+\varepsilon}, \Delta^{-(1 - \theta) + \beta} \}  \lesssim \kappa_i^{1/2} \Delta^{-1/2+\varepsilon}
$$
and 

\medskip
\noindent
{\it Case ii-b) $ \theta> 1/2 > 1 - \gamma \nu /2$}
$$
  \max\{\kappa_i^{1/2} \Delta^{-1/2+\varepsilon}, \Delta^{-(1 - \theta) + \beta} \}  \lesssim  \Delta^{-(1 - \theta) + \beta}, \, \,
$$
and we conclude the result.
\end{proof}

\subsection{Proof of Theorem \ref{MFR}}

The dynamics of the low degree nodes has been characterized in Proposition \ref{MeanCoVar}. 
To prove our main result we use the Homogeneity of the mean field Proposition \ref{emf}

\begin{proof}
Consider the dynamics of the high degree nodes 
$$
{x}_i(t+1) = { f}({ x}_i(t)) + \alpha r_i(x_i,y_i),\,\,  i=1,\dots,\ell
$$
where the coupling term $r_i$ reads
\begin{equation}\label{rHub}
r_i(x_i,y_i) =  \sum_p u_p(x_i) \left(  \sum_q y_{i,q} \right)
\end{equation}
with 
$$
y_{q,i} = \frac{1}{\Delta}\sum_j A_{ij} v_q(x_j),
$$
as before. By Proposition \ref{emf} on the homogeneity of the mean field,  we obtain 
\begin{equation}
y_{q,i}  =  \kappa_i \langle v_q \rangle +  \xi_{q,i}
\label{eq:ymean}
\end{equation}
for almost all networks in $\mathcal{G}(\bm{w})$, with $ \langle v_q \rangle= \int_M v_q(x)  \mu_0 dm$
and 
\begin{equation}
| \xi_i | \lesssim \kappa_i^{1/2} \Delta^{-\gamma \nu/2 + \beta}, \mbox{ for }i=1,2,\dots,\ell .
\label{eq:errorest}
\end{equation}
Using (\ref{rHub}) and  (\ref{eq:ymean}) we obtain 
$$
r_i(x_i,y_i) = \kappa_i g(x) +  \zeta_i
$$
where 
$$
g(x) =  \sum_{p,q} u_p(x) \langle v_q \rangle= \int h(x,y) \mu_0(d y)
$$ and $\zeta_i = \sum_{p,q} u_p(x_i) \xi_{q,i} $.
Since the functions $u_a$ is continuous and the manifold is compact
and because of  (\ref{eq:errorest}) 
it follows that
$${x}_i(t+1) = { f}({ x}_i(t)) + \alpha \kappa_i g(x_i) + \alpha{\zeta}_i(t), \mbox{ for each }i=1,\dots,
\ell,$$
has the properties claimed in  the Main Theorem.
\end{proof}

{\bf Acknowledgments:} This work was partially supported by FP7 IIF Research Fellowship -- project number 303180,  EU Marie-Curie IRSES Brazilian-European partnership in Dynamical Systems (FP7-PEOPLE-2012-IRSES 318999 BREUDS), and the Brazilian agency FAPESP.


\begin{thebibliography}{99}

\bibitem{Kaneko} K. Kaneko,  Chaos 2, 279 (1992).

\bibitem{Keller} G. Keller, Progress in Probability Volume 46, 183-208 (2000).

\bibitem{Sinai} L.A. Bunimovich and Ya. G. Sinai, Nonlinearity 1, 491 (1988).

\bibitem{BricmontKupiainen} J. Bricmont, A. Kupiainen, Coupled analytic maps, Nonlinearity 8, 379--396 (1995).

\bibitem{Jiang} M. Jiang, Nonlinearity 8, 631--659 (1995). 

\bibitem{Rugh} T. Fischer and H.H. Rugh, Ergodic Theory Dynam. Systems 20, 109--143 (2000).

\bibitem{Jar} E. J\"arvenp\"a\"a, M. J\"arvenp\"a\"a,  Comm. Math. Phys. 220, 1--12, (2001). 

\bibitem{KellerLiverani1} G. Keller and C. Liverani, Discrete Contin. Dyn. Syst. 11,
325--335 (2004).

\bibitem{KellerLiverani2} G. Keller and C. Liverani, 
Comm. Math. Phys. 262, 33--50 (2006).

\bibitem{Book} J.-R.  Chazottes and B. Fernandez (eds.), {\it Dynamics of Coupled Map Lattices and of Related Spatially Extended Systems}, 
Lect. Notes Phys. 671 (Springer, Berlin Heidelberg 2005).

\bibitem{Young} J. Koiller and L-S. Young, Nonlinearity 23, 1121--1141 (2010)

\bibitem{Albert} R. Albert   and A.-L. Barab�si, Rev. Mod. Phys. {\bf 74}, 47 (2002).

\bibitem{Newman} M. E. J. Newman, {\it Networks: An Introduction}, Oxford University Press (2010).


\bibitem{Zhou} C. Zhou and J. Kurths, Chaos {\bf 16}, 015104 (2006); 

\bibitem{Arenas} J. G\'omez-Garde\~nes,
Y. Moreno and A. Arenas, Phys. Rev. Lett. {\bf 98}, 034101 (2007). 

\bibitem{HubSync} T. Pereira, Phys. Rev. E {\bf 82}, 03674 (2010).

\bibitem{Baptista} M. S. Baptista, {\it et al.}, PLoS ONE 7(11): e48118.


\bibitem{HubScience} P. Bonifazi, M. Goldin,  M. A. Picardo, {\it et al.}
Science {\bf 326}, 1419 (2009).

\bibitem{HubEp} R. J. Morgan  and I. Soltesz, Proc. Natl. Acad. Sci. USA {\bf 105}, 6179  (2008). 

\bibitem{Chung} F. R. K. Chung and L. Lu, {\it Complex Graphs and Networks}, American Mathematical Society (2006).

\bibitem{Bollobas} B. Bollob\'as, {\it Random Graphs}, Cambridge University Press; 2 edition (2001). 

\bibitem{ChungComb} F. Chung, L. Lu  and V. Vu, Annals. Comb. {\bf 7}, 21 (2003).

\bibitem{vStrien} W. de Melo and S. van Strien, {\it One-Dimensional Dynamics}, Springer (1993).

\bibitem{Viana} M. Viana, {\it Stochastic dynamics of deterministic systems
}, IMPA (1997).

\end{thebibliography}
\end{document}